\documentclass[12pt,reqno,oneside]{amsart}

\usepackage{amsmath}
\usepackage{amssymb}
\usepackage{amsthm}
\usepackage{graphics}
\usepackage{eucal}
\usepackage{bm}
\usepackage[colorlinks]{hyperref}
\usepackage[capitalize,nameinlink,noabbrev]{cleveref}
\usepackage{mathrsfs}
\usepackage[normalem]{ulem}
\usepackage{color}
\usepackage[dvipsnames]{xcolor}
\usepackage{mathtools}
\usepackage{geometry}
\newtheorem{theorem}{Theorem}
\newtheorem{definition}{Definition}[section]
\newtheorem{proposition}{Proposition}[section]
\newtheorem{lemma}{Lemma}[section]

\newtheorem{remark}{Remark}[section]
\newtheorem{problem}{Problem}[section]
\newtheorem{conjecture}{Conjecture}[section]
\newtheorem{dream theorem}{Dream theorem}[section]
\DeclareMathOperator{\inj}{inj}
\DeclareMathOperator{\Ad}{Ad}
\numberwithin{equation}{section}

\newcommand{\SL}{\operatorname{SL}}
\newcommand{\diag}{\operatorname{diag}}

\newcommand{\supp}{\operatorname{supp}}

\newcommand{\bx}{\mathbf{x}}

\newcommand{\bp}{\mathbf{p}}

\newcommand{\by}{\mathbf{y}}

\newcommand{\Z}{\mathbb{Z}}
\newcommand{\R}{\mathbb{R}}

\usepackage{mathtools}
\usepackage{tikz-cd}
\usepackage{graphicx}

\author{Liyang Shao}
\address[Liyang Shao]{Department of Mathematics, University of California-Berkeley, Berkeley, USA}
\email{liyang\_shao@berkeley.edu}
\title[Nonlinear Khintchine]{Khintchine dichotomy for images of self-similar measures and almost random walks}
\date{}

\usepackage{environ}

\begin{document}
\sloppy
%\large

\begin{abstract}
    Following the celebrated breakthrough in B{\'e}nard-He-Zhang  \cite{Benard_He_Zhang_2026}\cite{BHZ_2026}, we prove Khintchine dichotomy for push-forwards of self-similar measures on $\R^d$ under uniform $C^{1+\alpha}$ maps. For this, we also study almost random walks on $\SL(d+1,\R)/\SL(d+1,\Z)$ and prove their effective equidistribution. 
\end{abstract}

\maketitle
%\centering
\section{Introduction}

Denote by $\text{Sim}(\R^d)$ the group of similarities of $\R^d,$ which are of the form $$\phi:\bx\mapsto r_{\phi}O_\phi\bx+\mathbf b_\phi,r_\phi>0,\mathbf b_\phi\in\R^d,O_\phi\in O(d).$$

Now we consider an irreducible probability measure $\lambda$ on $\text{Sim}(\R^d)$,  i.e. no proper affine subspace of $\R^d$ is invariant under $\lambda$-almost
every $\phi.$ Further, we assume that $\lambda$ satisfies the finite moment condition, i.e. there exists $\epsilon>0$ so that \begin{equation}\int\bigl(r_\phi^\epsilon+r_\phi^{-\epsilon}+\Vert\mathbf b_\phi\Vert^\epsilon\bigr)\,d\lambda(\phi)<\infty,\label{lambda-finite}\end{equation} and is also contractive on average, i.e. \begin{equation}\label{contra}
    l:=-\int\ln r_\phi d\lambda(\phi)>0.
\end{equation}

Then by \cite[Theorem 2.5]{Bougerol_Picard_1992} there exists a unique probability measure $\sigma$ on $\R^d$ so that $\sigma$ is $\lambda$-stationary \begin{equation}
    \label{linear-sta}\int\phi_*\sigma d\lambda(\phi)=\sigma.
\end{equation}

A canonical example of such self-similar measure would be the Hausdorff measure $\mathcal H^{\dim K}$ restricted to middle-thirds Cantor set $K$, where the corresponding $\lambda$ is evenly distributed on the two affine transformations $\phi_1:x\to x/3$ and $\phi_2:x\to x/3+2/3.$

The Diophantine properties of points distributed according to self-similar measures have been extensively studied over the last a few decades. Let's first recall the main objects of such study, \textit{$\Psi$-approximable} vectors $$W(\Psi):=\left\{\bx\in \R^d:\Vert q\bx-\bp\Vert_{\infty}<\Psi(q)\text{ for infinitely many }(\bp,q)\in\Z^{d}\times\Z_{>0}\right\}.$$ 

Classical Khintchine's dichotomy on Lebesgue measure says that, $W(\Psi)$ is either null or full Lebesgue measure, and each scenario can be simply determined by whether $\sum_{q>0}\Psi(q)^d<\infty$ or not \cite{Khintchine1926}. 

For Khintchine dichotomy on fractal sets, a motivating question is Mahler’s 1984 question about rational approximation to points of the middle-thirds Cantor set \cite{Mahler_1984}. Following this question, extremality was established for self-similar measures with the open set condition in \cite{Weiss2001} and subsequently developed within the framework of (absolutely) friendly measures in \cite{KLW,Pollington_Velani_2005}. %via studying `friendly' measure.  

More recently, elaborating Benoist-Quint  \cite{BenoistQuint2012,BenoistQuint2013II,BenoistQuint2013III} and Eskin-Mirkazhani-Mohammadi \cite{EMM2015}, a theory of expanding measures by Simmons-Weiss \cite{Simmons_Weiss_2019} and Prohaska-Sert-Shi \cite{Prohaska_Sert_Shi_2023} established a systematic viewpoint of random walks on simple Lie groups %$\SL(d+1,\R)/\SL(d+1,\Z)$ 
towards Diophantine approximation on fractals. In a related development, Khalil-Luethi \cite{Khalil_Luethi_2023} connects Khintchine's dichotomy with equidistribution of trajectory $a(t)u(\mathbf s)\Z^{d+1}$ and proves Khintchine's dichotomy for a large class of self-similar measures. 

It is worth noting that Fourier-analytic approaches yield convergence results in Yu \cite{Yu_2021} and convergence and divergence results in Datta-Jana \cite{Datta_Jana_2024}, under suitable Fourier-dimensional hypotheses. %Such approaches are related to equidistribution on $\SL(d+1,\R)/\SL(d+1,\Z)$ \cite{Datta_Jana_2024}. 

The celebrated breakthrough in B{\'e}nard-He-Zhang \cite{Benard_He_Zhang_2026}(in the case of $\R$, and later in \cite{BHZ_2026} for the case of $\R^d$.) established Khintchine dichotomy for all self-similar measures $\sigma$ driven by irreducible probability measures $\lambda$ on Sim($\R^d$). They proved this dichotomy by establishing effective equidistribution and double equidistribution of expanding translates of \(\sigma\) by the method of dimension bootstrap%, where the key ingredient is a multislicing projection theorem as described in \cite[Section 7]{BHZ_2026}.

It is somehow natural to expect Khintchine dichotomy on images of self-similar measures. One motivation is the polynomial Fourier decay of nonlinear images of self-similar measures, which can be dated back to Kaufman \cite{Kaufman1984} has recently been established for analytic non-affine images in dimension one \cite{AlgomChangWuWu2025, BakerBanaji2025}, and in higher dimensions \cite{BanajiYu2026Qualitative}. Indeed, for example, with sufficiently large Fourier decay exponent, one can derive the  convergence part of Khintchine dichotomy(see, e.g. \cite{Yu_2021}).

The main goal of this paper is to establish Khintchine's dichotomy for the push-forward of self-similar measures under $C^{1+\alpha}$-maps from $\R^d$ to $\R^d$, with mild conditions on non-degeneracy and boundedness. %Note that this is not an immediate consequence of the self-similar case: a smooth change of variables generally destroys the similarity structure and does not preserve rational approximation.

The specific setting is as follows. Let $h:\R^d\to\R^d$ be a $C^{1}$-map, defined on a neighborhood of the support of $\sigma$ with \begin{equation}|\det Dh|\geq c>0,\quad\Vert Dh\Vert_\infty\leq C<\infty.\label{bdd-assumption0}\end{equation}
Further, for  $\alpha\in(0,1],$ we require the following uniform Hölder bounds on $\supp\sigma$: for some $H>0$ and all $\mathbf v,\mathbf w\in \supp\sigma$,
\begin{equation}\label{uniform-holder}
\begin{split}
\Vert Dh(\mathbf v)-Dh(\mathbf w)\Vert&\leq H\Vert\mathbf v-\mathbf w\Vert^\alpha,\\
\Vert h(\mathbf v)-h(\mathbf w)-Dh(\mathbf w)(\mathbf v-\mathbf w)\Vert&\leq H\Vert\mathbf v-\mathbf w\Vert^{1+\alpha}.
\end{split}
\end{equation}
Note that these bounds are automatic when $h$ is $C^{1+\alpha}$ and $\supp\sigma$ is compact.%Consider $$\nu=h_*\sigma.$$

We establish the following Khintchine's dichotomy theorem for $\nu=h_*\sigma,$

\begin{theorem}\label{main: K}
    Let $\sigma$ be a self-similar measure on $\R^d$ driven by an irreducible probability measure $\lambda$ on Sim$(\R^d)$ that satisfies \eqref{lambda-finite} and \eqref{contra}, and $h:\R^d\to\R^d$ be a $C^{1}$-map that satisfies \eqref{bdd-assumption0} and \eqref{uniform-holder}. Denote $\nu=h_*\sigma.$ Then for any non-increasing function $\Psi:\Z_{>0}\to[0,\infty),$ we have$$\nu(W(\Psi))=\begin{cases}0,&\sum_{q=1}^\infty\Psi(q)^d<\infty,\\
1,&\sum_{q=1}^\infty\Psi(q)^d=\infty.
\end{cases}$$
\end{theorem}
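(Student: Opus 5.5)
We will follow the dynamical route of Khalil--Luethi and Bénard--He--Zhang. Through the Dani correspondence, both halves of the dichotomy for $\nu=h_*\sigma$ reduce to statements about expanding translates $a(t)u(h(\bx))\Z^{d+1}$, with $a(t)=\diag(e^{dt},e^{-t},\dots,e^{-t})$ and $u(\mathbf s)$ the unipotent matrix attached to $\mathbf s\in\R^d$, as $\bx$ is sampled from $\sigma$. Concretely, we need two inputs.
\begin{itemize}
\item[(E)] Effective equidistribution: $\int f(a(t)u(h(\bx))\Z^{d+1})\,d\sigma(\bx)\to\int f$ at a polynomial rate in $e^{-t}$, for $f$ smooth and compactly supported, and, after the standard cusp modification, for Siegel transforms of indicator-type functions.
\item[(DE)] Effective double equidistribution: the analogous statement for products $f_1(a(t_1)u\,\cdot)f_2(a(t_2)u\,\cdot)$ with $t_2-t_1$ large.
\end{itemize}
Given (E) and (DE), the convergence half follows from a first-moment estimate: each $\nu$-measure of the approximation set at scale $e^{t}$ is bounded by a constant times the Lebesgue quantity. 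The divergence half follows from a quasi-independence-on-average (Chung--Erdős / divergence Borel--Cantelli) argument with error summable against $\sum\Psi(q)^d$, which gives positive measure. Full measure then comes from a zero-one law. For that step we use the $\lambda$-stationarity of $\sigma$: since $W(\Psi)$ is invariant under rational translations, we localize to cylinders $\phi_{1}\circ\dots\circ\phi_n(\supp\sigma)$. The map $h$ is close to affine on small cylinders, and we apply density/ergodicity on the symbolic side. This is the same reduction as in BHZ, now run for $\sigma$ composed with $h$.

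The new input is establishing (E) and (DE) for the \emph{nonlinear} image. We linearize $h$ at scale $e^{-\beta t}$ for some small $\beta>0$, as follows.
\begin{enumerate}
\item Decompose $\sigma$ via stationarity into pieces $(\phi_{\omega})_*\sigma$, where $\phi_\omega$ is a word of contraction ratio about $e^{-\beta t}$. A stopping time ensures the ratios are comparable; the moment condition \eqref{lambda-finite} controls the tails.
\item On each piece, use \eqref{uniform-holder} to write $h(\phi_\omega\by)=h(\mathbf w)+Dh(\mathbf w)r_\omega O_\omega\by+O(e^{-(1+\alpha)\beta t})$.
\item The error term is moved by $a(t)$ by at most $e^{dt}\cdot e^{-(1+\alpha)\beta t}$ in the expanding direction. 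This is not small, so we split $a(t)=a(t-s)a(s)$ with $s=\beta t$. The linearized part is $u(Dh(\mathbf w)r_\omega O_\omega\by)$. Conjugating by $a(s)$ and absorbing $r_\omega\approx e^{-s}$ turns this into $u(L\by)$ with $L=Dh(\mathbf w)O_\omega$, which is a fixed element of a compact set of $\mathrm{GL}_d$ by \eqref{bdd-assumption0}.
\end{enumerate}

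Thus we get a random-walk-like product in which, at each step, the linear part is a bounded, nondegenerate but \emph{varying} matrix $L$ rather than a similarity. This is the ``almost random walk'' of the abstract, and its effective equidistribution is the main obstacle. The hope is that the BHZ dimension-bootstrap and multislicing projection argument only uses the measure at each scale through non-concentration on affine subspaces, and that $\sigma$ pushed forward by a uniformly bounded linear map inherits this. Irreducibility is preserved under $L$, and the non-concentration constants are distorted by at most $\|L\|\|L^{-1}\|\le C^{d}/c$.

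We will try to prove a uniform version of the BHZ effective equidistribution over the compact family $\{u(L\,\cdot)_*\sigma\}$, and then iterate scale by scale. The Hölder error must stay below the effective rate: we choose the linearization scale so that $e^{dt}e^{-(1+\alpha)\beta t}\cdot e^{-d(t-\dots)}$ is polynomially small, that is, we apply linearization only over the last block of length about $\alpha\beta t/d$. The remaining stretch is handled by the uniform statement. This yields (E). For (DE), the same decomposition applied at two times $t_1<t_2$ reduces the problem to uniform double equidistribution for the linearized family, which again follows from the uniform BHZ double equidistribution.
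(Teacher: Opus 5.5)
Your reduction of the dichotomy to (E) and (DE) is the one the paper uses: \cite[Theorem 9.1]{Khalil_Luethi_2023} for convergence and \cite[Theorems 9.2 and 9.3]{BHZ_2026} for divergence. The latter go through Schmidt-type counting and give full measure directly, so no separate zero-one law is needed. The localization you sketch would need justification anyway, because the pieces $(h\circ\phi)_*\sigma$ of $\nu$ are related by nonlinear maps that do not preserve $W(\Psi)$. The genuine gap is that you never prove (E) or (DE). You defer them to a hoped-for uniform re-run of the BHZ bootstrap, and you place the difficulty in the wrong spot. The varying linear part is harmless: $u(L\by)=k_Lu(\by)k_L^{-1}$ and $k_L$ commutes with $a(t)$. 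So on a piece $h\circ\phi(\supp\sigma)$ the linearized integrand is exactly $f\bigl(k_{Dh(\mathbf b_\phi)O_\phi}a(tr_\phi)u(\mathbf s)x_\phi\bigr)$, integrated against $\sigma$ itself, with $x_\phi=k_{Dh(\mathbf b_\phi)O_\phi}^{-1}a(r_\phi^{-1})u(h(\mathbf b_\phi))x$. Since $k_L$ stays in a compact set, \cref{Rd-equi} applies to $f\circ k_L$ as a black box. Your scale bookkeeping also needs fixing. The H\"older error is conjugated by all of $a(t)$, so splitting $a(t)=a(t-s)a(s)$ does not shrink it, and a small linearization exponent makes it blow up. You need pieces of size $r$ with $tr^{1+\alpha}\to0$ and $tr\to\infty$ polynomially. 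The paper takes $r_\phi\approx e^{-nl}$ and $\ln t=\beta nl$ with $\beta\in(1+\frac{2}{M},(1-\frac{2}{M})(1+\alpha))$. Linearization then absorbs more than a $\frac{1}{1+\alpha}$ proportion of $\ln t$, and equidistribution of $\sigma$ is used only on the final stretch of length $\ln(tr_\phi)$.

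What is actually missing is control of the cusp at these new base points. On each piece \cref{Rd-equi} gives an error $\ll\mathcal S_{\infty,p}(f)\inj(x_\phi)^{-1}(tr_\phi)^{-c}$, and the $x_\phi$ vary with $\phi$ and may lie deep in the cusp. Summing over pieces therefore requires $\lambda^{*n}\{\phi:\inj(x_\phi)\le\rho\}\ll\rho^{\delta}(1+e^{-\delta n}\inj(x)^{-\Delta})$, which is \cref{nonlinear-recurr}. Because $h$ is nonlinear, the laws of $x_\phi$ are not a $\mu$-random walk, so the recurrence of \cite{BHZ_2026} does not apply. The paper writes $x_{\psi\phi}=z_{\psi,\phi}x_\psi$ with $\Vert z_{\psi,\phi}g_\phi^{-1}-I\Vert\ll r_\psi^{\alpha}\bigl(\Vert\mathbf b_\phi\Vert^{\alpha}+\Vert\mathbf b_\phi\Vert^{1+\alpha}r_\phi^{-1}\bigr)$. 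From this, $\zeta_n$ is exponentially close to $\mu^{*m}*\zeta_{n-m}$ on $B_{\infty,p}(X)$, with $m=\lceil\theta n\rceil$, and has moments at most $e^{Kn}$ (\cref{xphi-almost}). A mollified, truncated Margulis function then gives recurrence by induction on $n$. So almost random walks are needed for recurrence, not for equidistribution. Your ``iterating scale by scale'' does not supply this as stated. Extracting non-divergence at an earlier time from (E) costs a power of $\rho$ and a weaker exponent at each iteration, and you give no scheme that closes. For (DE) you likewise do not need a uniform double equidistribution. Choose pieces with $t_1r_\phi\to0$; then $f_1(a(t_1)u(\cdot)x_0)$ is nearly constant on each piece, so $J$ is bounded by $\mathcal S_{\infty,p}(f_1)\,I(n,f_2,t_2,x_0)$ plus small errors. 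That averaged single-equidistribution quantity is controlled by the same recurrence.
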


\begin{remark}
    The main theorem is not an immediate consequence of \cite{BHZ_2026}; push-forwards under smooth maps usually destroy the self-similarity of a measure, thus giving difficulty in following exactly the route in \cite{BHZ_2026}. The key ideas will be introduced in \cref{idea}.
\end{remark}

\begin{remark}
    We may assume, without loss of generality, that $h$ is orientation-preserving. Indeed, let $J=\diag\{-1,1,\cdots,1\}\in O(d),$ then $J\bx\in W(\Psi)\iff\bx\in W(\Psi).$ Then we get a new orientation-preserving map $\tilde h$ by, if necessary, flipping the orientation sign of $h$ via $J$ on each component of $\supp\sigma,$ and by previous arguments $\tilde\nu(W(\Psi))=\nu(W(\Psi))$ where $\tilde\nu=\tilde h_*\sigma.$ Further, this new $\tilde h$ still satisfies \eqref{bdd-assumption0} and \eqref{uniform-holder}, since \eqref{bdd-assumption0}\eqref{uniform-holder} for $h$ tells us that the components of $\supp\sigma$ where $h$ has different orientation signs are at least $(\frac{c}{C^{d-1}H})^{1/\alpha}$ apart. Indeed, suppose $h$ has opposite orientation signs at $\bx,\by\in\supp\sigma,$ then $$\Vert \bx-\by\Vert^\alpha\geq H^{-1}\Vert Dh(\bx)-Dh(\by)\Vert\geq \frac{c}{C^{d-1}H},$$where the last inequality is true by bounding singular values of $Dh$ at $\bx$ and $\by.$

    From now on, we assume that $h$ is orientation-preserving:\begin{equation}\det Dh\geq c>0,\quad\Vert Dh\Vert_\infty\leq C<\infty.\label{bdd-assumption}\end{equation}
\end{remark}

\subsection{Structure of the paper}

In \cref{def-sec}, we recall the necessary notions, and how Khintchine's dichotomy can be reduced to (double) equidistributions, as developed in \cite{Khalil_Luethi_2023,BHZ_2026}.

In \cref{arw-sec}, we introduce the notion of almost random walks. Then we show their recurrence and equidistribution via Margulis height function and results from \cite{BHZ_2026}.

In \cref{mainproof-sec}, we first give an example of almost random walks associated to images of self-similar measures. Then we piece \cref{arw-sec} and the proof idea of \cite[Proposition 6.1]{Benard_He_Zhang_2026} altogether to prove the (double) equidistribution theorems.

Finally in \cref{future-sec} we give some remarks about further questions to explore.

\subsection*{Acknowledgement}The author thanks his advisors Svetlana Jitomirskaya and Amir Mohammadi for suggesting this problem and providing continuous support and guidance. The author thanks Anton Gorodetski, Zuo Lin and Wenyu Pan for many helpful discussions. The author also thanks Wenyu Pan for explaining her joint work with Li, Tong and Xu \cite{LiPanTongXu2026}.

This research was supported by NSF DMS-2052899, DMS2155211, and Simons 896624.

\subsection*{AI acknowledgement} A generative LLM was used for proofreading and literature searching. All main mathematical contributions are from the human authors.
%Combining these two approximation steps, one could get Khintchine dichotomy via estabilishing equidistribution according to \cite{BHZ_2026}.
%In this paper, we aim to study a type of process on $X=G/\Gamma,$ where $\Gamma$ is a lattice in a Lie group $G,$ which is called \textit{$\mu$-almost random walks} and will be defined in \cref{def-sec}. In particular, we will give an effective recurrence and equidistribution result of such process. As a consequence of 

\section{Setup and proof strategy}
\label{def-sec}

We follow the route in \cite{BHZ_2026} to prove Khintchine's dichotomy via (double) equidistribution of $\nu$.

First of all, we may assume $\lambda$ is supported on orientation-preserving similarities by \cite[Lemma 5.3]{Benard_He_Zhang_2026}.

We look at $\text{SL}(d+1,\R),$ 
$d\geq1$. Let $\Lambda$ be a lattice in $\text{SL}(d+1,\R)$ and $X:=\text{SL}(d+1,\R)/\Lambda.$ We write $m_X$ for the Haar probability measure. Derivatives and the corresponding norms are taken with respect to a fixed basis of the Lie algebra of $\text{SL}(d+1,\R)$.

 $$a(t)=\diag\{t^{\frac{1}{d+1}}I_d,t^{-\frac{d}{d+1}}\},u(\mathbf s)=\begin{pmatrix}
    I_d&\mathbf s\\\mathbf 0&1
\end{pmatrix},\mathbf s\in\R^d,\ t>0,$$ both of which are in $\SL(d+1,\R).$

Also
$$p:=\left\lceil\frac{1}{2} \dim \text{SO}(d+1)\right\rceil.$$

We focus on the function space $B_{\infty,p}(X),$ which denotes the smooth functions on $X$ whose $\infty$-norms of derivatives of order up to $p$ are bounded. The corresponding norm in $B_{\infty,p}(X)$ is denoted by $\mathcal S_{\infty,p}(\cdot)$.

In \cite{BHZ_2026} the following equidistribution was proved.\begin{theorem}[Theorem 1.2 in \cite{BHZ_2026}]\label{Rd-equi}
    There exist constants $C,c>0$ that depend only on $\Lambda,\sigma$ so that for any $f\in B_{\infty,p}(X),x\in X,t>1,$

    $$\left|\int f(a(t)u(\mathbf s)x)d\sigma(\mathbf s)-m_X(f)\right|\leq C\mathcal S_{\infty,p}(f)\text{inj}(x)^{-1}t^{-c}.$$
\end{theorem}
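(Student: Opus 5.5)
The plan follows the three-stage scheme now standard for effective equidistribution of expanding translates: \emph{recurrence}, then \emph{dimension bootstrap}, then \emph{final mixing}. The basic tool is self-similarity. Iterating \eqref{linear-sta} along a stopping time $\tau$ (stop the random word $\phi=\phi_1\circ\cdots\circ\phi_n$ once $r_\phi\le \rho$) gives $\sigma=\int \phi_*\sigma\,d\lambda^{*\tau}(\phi)$. The conjugation identity
\[
u(r O\mathbf s)=d_{r,O}\,u(\mathbf s)\,d_{r,O}^{-1},\qquad d_{r,O}=\diag\{r^{\frac{1}{d+1}}O,\,r^{-\frac{d}{d+1}}\},
\]
where $d_{r,O}$ equals $a(r)$ composed with an element of the compact centralizer $O(d)$, then yields
\[
\int f(a(t)u(\mathbf s)x)\,d\sigma(\mathbf s)=\int\!\!\int f\bigl(a(tr_\phi)\,u(\mathbf s)\,y_\phi\bigr)\,d\sigma(\mathbf s)\,d\lambda^{*\tau}(\phi),
\]
with $y_\phi$ essentially equal to $O_\phi^{-1}a(r_\phi)^{-1}u(\mathbf b_\phi)x$ up to bounded corrections. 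So the expanding translate of $\sigma$ is an average, over a ``random walk'' on $X$, of shorter expanding translates of $\sigma$. I would split $t=t_1t_2t_3$ and run the argument in stages.

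\textbf{Recurrence.} Using a Margulis-type height function $\beta$ on $X$, I would show that $\int \beta(a(t_1)u(\mathbf s)x)\,d\sigma(\mathbf s)\le c\,t_1^{-\kappa}\beta(x)+B$. The contraction comes from the irreducibility of $\lambda$, which gives a uniform non-concentration of $\sigma$ near proper affine subspaces and hence controls $\|a(t)u(\mathbf s)v\|^{-\delta}$ on average for $v$ in exterior powers. After time $t_1\approx \mathrm{inj}(x)^{-O(1)}$, most of the mass therefore sits in a fixed compact set. This is the source of the factor $\mathrm{inj}(x)^{-1}$ in the bound.

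\textbf{Dimension bootstrap.} Starting from points in the compact set, I would show that the pushed measure $\mu_{t_2}=(a(t_2)u(\cdot)y)_*\sigma$, viewed at scale $\delta=t_2^{-\epsilon}$ in a local chart, has dimension close to $\dim G$, uniformly over $\delta$-balls. The initial dimension comes from the Frostman exponent of $\sigma$, which is positive by irreducibility and \eqref{lambda-finite}. Each further step combines the multiplicative random-walk structure above with a projection/multislicing theorem for the unipotent directions: convolving a measure of dimension $\gamma$ transversal to the $a(\cdot)$-orbit with the expanding image of $\sigma$ raises the dimension to $\gamma+\eta$ for some $\eta>0$, unless the measure concentrates near a subvariety. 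The concentration alternative is excluded by irreducibility. Iterating $O(1)$ times gives dimension $\dim G-\epsilon'$.

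\textbf{Final stage.} A measure of near-full dimension at scale $\delta$, further translated by $a(t_3)u(\mathbf s)$ and averaged over $\sigma$, equidistributes with a polynomial rate. The reason is that after thickening by a $\delta$-bump, which costs $\mathcal S_{\infty,p}(f)\delta$, it is absolutely continuous with $L^2$ density at most $\delta^{-\epsilon'}$. Effective equidistribution of expanding horospherical translates, via exponential mixing for $a(t)$ with the Sobolev order $p$, then gives error $\delta^{-\epsilon'}t_3^{-c}$. Choosing all parameters as powers of $t$ produces $C\,\mathcal S_{\infty,p}(f)\,\mathrm{inj}(x)^{-1}t^{-c}$.

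The main obstacle is the bootstrap step. Proving a multislicing projection theorem strong enough to increase dimension in all $d$ unstable directions at once, with only irreducibility (no separation condition) to rule out concentration, is the genuinely new ingredient. I would try first to reduce it to a Bourgain-type projection theorem applied to a family of projections parametrized by the $O(d)$-parts and scales of the random walk.
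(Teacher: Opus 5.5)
The paper does not prove this statement. It imports it from \cite{BHZ_2026} as a black box, remarking only that the proof there is a dimension bootstrap whose key ingredient is a multislicing projection theorem \cite[Section 7]{BHZ_2026}. Your three-stage outline agrees with that description. As a proof, however, it has a genuine gap at exactly the step you flag, and the reduction you suggest for that step would fail in basic examples.

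Write $\mathfrak u^-,\mathfrak l,\mathfrak u^+$ for the contracted, neutral and expanded subspaces of $\Ad(a(t))$, $t>1$. Projections parametrized by the $O(d)$-parts and scales of the walk carry no transversality in general:
\begin{itemize}
\item for orientation-preserving $\lambda$ on $\R$, or $\lambda$ supported on homotheties of $\R^d$, the rotation parts are trivial;
\item $\Ad(a(r_\phi))$ acts by a scalar on each of $\mathfrak u^-,\mathfrak l,\mathfrak u^+$, so the scales never change the kernel of a projection onto these blocks.
\end{itemize}
For the middle-thirds Cantor measure your family is therefore a single projection, and a single projection gives no dimension gain for general measures.

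The transversality has to come from the unipotent parameter. After applying $a(t)$, the local measure is seen through the projections $\pi\circ\Ad(u(\mathbf s))$ onto the expanded directions, with $\mathbf s\sim\sigma$. This is a polynomial family. For $d=1$, writing $v=v_1E+v_2H+v_3F$ in the standard basis of $\mathfrak{sl}_2$, the expanded component of $\Ad(u(s))v$ is $v_1-2sv_2-s^2v_3$, a nondegenerate curve of directions of moment-curve type. Irreducibility enters as non-concentration of $\sigma$, and of its rescaled pieces, near proper affine subspaces. A projection theorem for such families, valid for parameter measures of arbitrarily small dimension, is what the multislicing theorem provides, and your proposal does not supply it. Also, your concentration alternative, as written, concerns the measure being bootstrapped. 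Irreducibility of $\lambda$ says nothing about that measure; it controls only the parameter measure $\sigma$.

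There is a second gap at the seed of the bootstrap. Since $U=\{u(\mathbf s)\}$ is abelian, $\Ad(u(\mathbf s))$ is the identity on $\mathfrak u^+=\text{Lie}(U)$. So the projection mechanism gains nothing on mass lying along $U$-plaques, and it must start from positive dimension transverse to $U$. The Frostman exponent of $\sigma$ only controls the mass of a single local plaque of $a(t_2)u(\cdot)y$. A $\delta$-ball in $X$, however, collects mass from many returns of this orbit. Bounding that mass requires showing the returns are separated transversally, a discreteness argument using $\Lambda$ and the injectivity radius along the orbit, and this is missing.

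Two smaller points:
\begin{itemize}
\item Thickening before applying $a(t_3)$ costs $\mathcal S_{\infty,p}(f)\,t_3\delta$, not $\mathcal S_{\infty,p}(f)\delta$, because $a(t_3)$ expands $\mathfrak u^+$. This is harmless if, say, $t_3\le\delta^{-1/2}$ and $\epsilon'$ is small compared with the $L^2$ decay rate.
\item A one-step contraction estimate for $\Vert a(t)u(\mathbf s)v\Vert^{-\delta}$ does not by itself give your recurrence bound for all $t_1$, because $t\mapsto a(t)u(\cdot)_*\sigma$ is not a semigroup. The recurrence should instead be derived from the random-walk height function of \cref{h-fcn:def}, through the stopping-time decomposition you wrote down.
\end{itemize}
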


Building upon the previous theorem, we prove the following main theorems

\begin{theorem}\label{main: single}
    There exist constants $C,c,\Delta>0$ that depend only on $h,\Lambda,\sigma$ so that for any $f\in B_{\infty,p}(X),x\in X,t>1,$

    $$\left|\int f(a(t)u(\mathbf s)x)d\nu(\mathbf s)-m_X(f)\right|\leq C\mathcal S_{\infty,p}(f)\text{inj}(x)^{-\Delta}t^{-c}.$$
\end{theorem}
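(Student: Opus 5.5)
We reduce \cref{main: single} to \cref{Rd-equi} by linearizing $h$ at a small scale. The idea is to decompose $\sigma$ into pieces supported on cylinders of diameter about $\rho$, with $\rho$ a small power of $t^{-1}$. On each piece $h$ is close to an affine map, and the affine part can be absorbed into the group action.

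\textbf{Step 1: decomposition.} Using stationarity \eqref{linear-sta} together with a stopping time, we write
\[
\sigma=\int \phi_*\sigma\, d\lambda^{(\tau)}(\phi),
\]
where $\lambda^{(\tau)}$ is the law of the random composition $\phi_1\cdots\phi_n$ stopped when $r_\phi$ first drops below $\rho$. Because of the moment assumption \eqref{lambda-finite}, the pieces with $r_\phi\ll\rho^{1+\epsilon'}$, or with $\|\mathbf b_\phi\|$ large, carry mass at most a power of $\rho$. We discard them at cost $\ll\|f\|_\infty\rho^{c}$; this is exactly as in \cite{Benard_He_Zhang_2026}.

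\textbf{Step 2: linearization on a piece.} Fix a remaining piece, $\mathbf s=\phi(\mathbf y)=\mathbf w+r_\phi O_\phi\mathbf y$ with $\mathbf y\sim\sigma$. Using \eqref{uniform-holder}, set $A=Dh(\mathbf w)$. Then
\[
h(\mathbf s)=h(\mathbf w)+r_\phi A O_\phi\mathbf y+E,\qquad \|E\|\le H r_\phi^{1+\alpha}\|\mathbf y\|^{1+\alpha}.
\]
Since $\|u(\mathbf e)\|$ is conjugated by $a(t)$ to size $t\|\mathbf e\|$, the error displaces $a(t)u(h(\mathbf s))x$ by at most $t\rho^{1+\alpha}$, up to a tail in $\|\mathbf y\|$. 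We choose $r_\phi\approx\rho=t^{-\theta}$ with $\theta(1+\alpha)>1$. The displacement is then $t^{-\kappa}$, and it costs $\mathcal S_{\infty,p}(f)\,t^{-\kappa}$ by the Lipschitz bound on $f$.

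Next set $B=r_\phi A O_\phi$, write $B=r_\phi\det(A O_\phi)^{1/d}\cdot g_0$ with $g_0\in\SL(d,\R)$, and let $\tilde g=\diag(g_0,1)$. We have $u(B\mathbf y)=\tilde g\,u(\beta\mathbf y)\,\tilde g^{-1}$ with $\beta=r_\phi(\det A)^{1/d}$. Moreover $a(t)$ commutes with $\tilde g$, and $a(t)u(\beta\mathbf y)=a(t\beta)\,u(\mathbf y)\,a(\beta)^{-1}$. Hence
\[
a(t)u(h(\mathbf s))x\approx \tilde g\, a(t\beta)\, u(\mathbf y)\, x',\qquad x'=a(\beta)^{-1}\tilde g^{-1}u(h(\mathbf w))x.
\]

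\textbf{Step 3: applying \cref{Rd-equi}.} Apply \cref{Rd-equi} to $f\circ\tilde g$ at time $t\beta\approx t^{1-\theta}$; we need $\theta<1$, which is compatible with $\theta>1/(1+\alpha)$. Three quantities must be controlled:
\begin{itemize}
\item $\|\tilde g\|$ is bounded by \eqref{bdd-assumption}, so $\mathcal S_{\infty,p}(f\circ\tilde g)\ll\mathcal S_{\infty,p}(f)$ and $m_X(f\circ\tilde g)=m_X(f)$.
\item $\inj(x')\gg \inj(x)\,\|a(\beta)^{-1}\|^{-O(1)}\gg\inj(x)\,\rho^{O(1)}$, using that $h(\mathbf w)$ is bounded.
\item The condition $|\det Dh|\ge c$ ensures $\beta\asymp r_\phi$.
\end{itemize}
Summing over pieces gives the error bound
\[
\mathcal S_{\infty,p}(f)\Bigl(\inj(x)^{-1}t^{-c(1-\theta)+O(\theta)}+t^{-\kappa}+t^{-c'\theta}\Bigr).
\]

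\textbf{Main obstacle.} The difficulty is the loss of injectivity radius, $\rho^{-O(1)}=t^{O(\theta)}$, which competes with the decay $t^{-c(1-\theta)}$ coming from \cref{Rd-equi}. Since $\theta$ must exceed $1/(1+\alpha)$, a direct choice may fail when $c$ is small. My first attempt is to avoid pulling $a(\beta)^{-1}$ onto the basepoint. Instead, I would treat the remaining flow as a short pre-flow that only acts after the rescaling, so that the effective basepoint is $u(h(\mathbf w))x$, whose injectivity radius is comparable to that of $x$.

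If this still fails, I would iterate. Split $t=t_1t_2$. First use a rough linearization at an intermediate scale, together with Margulis height function recurrence (the almost random walk idea), so that most intermediate points land in a compact set with injectivity radius bounded below. The loss then enters only as $\inj(x)^{-\Delta}$, which explains the exponent $\Delta$ in the statement. After that, apply \cref{Rd-equi} from a bounded-height basepoint.
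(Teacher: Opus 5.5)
Your Steps 1--3 reproduce the first half of the paper's argument. You use stationarity with $r_\phi$ concentrated at the right scale; the paper uses $\lambda^{*n}$ restricted to $\mathcal E_{n,M}$ rather than a stopping time, which is convenient because the recurrence it needs is proved for fixed $n$. You linearize $h$ at $\mathbf b_\phi$ via \eqref{uniform-holder} and rescale, so that \cref{Rd-equi} is applied at $x'$, which is exactly the paper's $x_\phi$ from \eqref{def-xphi}. But the proposal stops where the real work begins. Step 3 genuinely fails, not just possibly. Pointwise, the loss in $\inj(x')$ is a fixed power of $\rho^{-1}$ that does not shrink with $c$: the general bound from $\|\Ad(a(\beta)^{\pm1})\|=\beta^{-1}$ is $\inj(x')\gg\rho\,\inj(x)$, and losses of this size do occur for some pieces and basepoints. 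With this bound, $c(1-\theta)>\theta$ and $\theta(1+\alpha)>1$ together force $c\alpha>1$, which is not available for the small, non-explicit exponent of \cref{Rd-equi}. Your first remedy cannot work either. The integral $\int f(a(t)u(\beta\mathbf y)z)\,d\sigma(\mathbf y)$ is literally $\int f(a(t\beta)u(\mathbf y)a(\beta)^{-1}z)\,d\sigma(\mathbf y)$. So $a(\beta)^{-1}$, an element of size $\rho^{-1}$ rather than a short flow, necessarily lands on the basepoint.

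The missing idea is to control $\inj(x_\phi)$ on average over $\phi\sim\lambda^{*n}$ rather than pointwise. This needs two ingredients you only name.

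First, identify the process. One has $x_{\psi\phi}=z_{\psi,\phi}x_\psi$ with $\|z_{\psi,\phi}g_\phi^{-1}-I\|\ll r_\psi^\alpha(\|\mathbf b_\phi\|^\alpha+\|\mathbf b_\phi\|^{1+\alpha}r_\phi^{-1})$ by the H\"older bounds, see \eqref{cocycle-error}. With moment bounds, this shows that the laws of $x_\phi$ are exponentially close to $\mu^{*m}*\zeta_{n-m}$ for $m$ a small fixed fraction of $n$. That is, they form a $\mu$-almost random walk (\cref{xphi-almost}). For $h=\mathrm{id}$ one has $x_\phi=g_\phi x$, an honest $\mu$-random walk. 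For nonlinear $h$ the steps $z_{\psi,\phi}$ depend on the past through $\mathbf b_\psi$ and $r_\psi$, so the Margulis inequality \eqref{margulis} does not apply directly.

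Second, transfer recurrence to such a process. The paper smooths the height function (\cref{sm}), truncates a small power of it so that \eqref{almost-conv} applies, controls tails by moments, and inducts on $n$ (\cref{prop-recurr}). This gives \cref{nonlinear-recurr}: $\lambda^{*n}\{\phi:\inj(x_\phi)\le\varrho\}\ll\varrho^{\delta}(1+e^{-\delta n}\inj(x)^{-\Delta})$. One then applies \cref{Rd-equi} only where $\inj(x_\phi)>\varrho$, with $\varrho$ a power of $r_\phi$ whose exponent is proportional to $c$ (the paper takes $\varrho=e^{-cnl/M}$). The loss $\varrho^{-1}$ then scales with $c$ and is beaten by $t^{-c}$; the exceptional set is where $\inj(x)^{-\Delta}$ enters.

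Your split $t=t_1t_2$ does not by itself resolve this. The late steps are close to genuine $\mu$-steps only when the early stage already carries most of the contraction, since the error is $r_\psi^\alpha$ times moments of the late steps. So the intermediate basepoints $x_\psi$ have the same cusp problem, and you are forced into an induction on $n$, which is exactly what \cref{prop-recurr} carries out.
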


Due to \cite[Theorem 9.1]{Khalil_Luethi_2023}, the above equidistribution theorem gives convergence part of Khintchine's
dichotomy.

Fix $x_0=e\Lambda\in X$. Recall the following definition in \cite[Section 8]{BHZ_2026}, which is inspired by \cite{KleinbockShiWeiss2017}.

\begin{definition}
    Given a probability measure $\xi$ on $\mathbb{R}^d$, bounded continuous functions $f_1, f_2 : X \to \R$, and times $t_2 \ge t_1 > 0$, we introduce
\begin{equation}
\Delta_{f_1,f_2}^\xi(t_1,t_2) := \left| \int_{\R^d} f_1(a(t_1)u(\mathbf s)x_0) f_2(a(t_2)u(\mathbf s)x_0) \, \mathrm{d}\xi(\mathbf s) - m_X(f_1)m_X(f_2) \right|.
\end{equation}
\end{definition}

We will further upgrade \cref{main: single} to the following double equidistribution. Such double equidistribution theorem would give Schmidt's estimate(a quantitative version of Khintchine's dichotomy; see \cite{Schmidt1960}) and thus lead to divergence part of Khintchine's dichotomy, as described in \cite[Section 9]{BHZ_2026}

\begin{theorem}\label{main: double}
    For every $\eta > 0$, there exist $C,c > 0$, depending also on $h,\Lambda,\sigma$, such that for all $t_1,t_2 > 1$ with $t_2 \ge t_1^{1+\eta}$ and $f_1, f_2 \in B_{\infty,p}(X)$, we have
\begin{equation}
\Delta_{f_1,f_2}^\nu(t_1,t_2) \le C\mathcal{S}_{\infty,p}(f_1)|m_X(f_2)|t_1^{-c} + C\mathcal{S}_{\infty,p}(f_1)\mathcal{S}_{\infty,p}(f_2)t_2^{-c}.
\end{equation}
\end{theorem}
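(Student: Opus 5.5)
The plan is to reduce the double equidistribution for $\nu=h_*\sigma$ to the single equidistribution of \cref{main: single}, applied to localized pieces of $\nu$ at scale $t_1$. The localization uses the self-similar structure of $\sigma$ together with the linearization \eqref{uniform-holder} of $h$.

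\textbf{Step 1: localize at scale $t_1$.} Write $\sigma$ as an average over cylinders. For $n\ge 1$, the stationarity \eqref{linear-sta} iterated $n$ times (stopped at a suitable stopping time) gives
\begin{equation}\sigma=\int (\phi_1\cdots\phi_\tau)_*\sigma\,d\lambda^{\otimes\mathbb N},\end{equation}
where $\tau$ is chosen so that $r_{\phi_1\cdots\phi_\tau}\approx t_1^{-\frac{1}{d+1}-\epsilon_1}$, with $\epsilon_1=\epsilon_1(\eta)$ small. The finite moment and contraction conditions \eqref{lambda-finite}, \eqref{contra} will control the exceptional words where the translation part is large or the ratio is badly off. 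These exceptional words get weight $t_1^{-c}$.

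On each piece $\sigma_w=(\phi_w)_*\sigma$, whose support has diameter $\rho\approx t_1^{-1/(d+1)-\epsilon_1}$, the function $\mathbf s\mapsto f_1(a(t_1)u(\mathbf s)x_0)$ is nearly constant. Indeed $a(t_1)u(\mathbf s)=u(t_1\mathbf s)a(t_1)$, and $t_1\rho\le t_1^{-\epsilon_1}\cdot t_1^{d/(d+1)}$. This is not small, so instead I would choose $\rho\approx t_1^{-1-\epsilon_1}$. Then $f_1$ varies by at most $\mathcal S_{\infty,p}(f_1)t_1^{-\epsilon_1}$ on each piece. Hence
\begin{equation}\Delta^\nu_{f_1,f_2}\le \int\Bigl|f_1(a(t_1)u(h(\mathbf s_w))x_0)\Bigr|\,\Bigl|\int f_2(a(t_2)u(\mathbf s)x_0)\,d(h_*\sigma_w)(\mathbf s)-m_X(f_2)\Bigr| + \Bigl|\int f_1\,d\nu'-m_X(f_1)\Bigr|\,|m_X(f_2)| + O(\cdot),\end{equation}
where $\nu'$ is the $t_1$-translate. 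The second term is handled by \cref{main: single} and gives $C\mathcal S(f_1)|m_X(f_2)|t_1^{-c}$.

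\textbf{Step 2: single equidistribution for the pieces $h_*\sigma_w$.} On the small cylinder, $h$ is well approximated by its affine part at the base point $\mathbf s_w$:
\[h(\phi_w\mathbf v)=h(\mathbf s_w)+Dh(\mathbf s_w)r_wO_w(\mathbf v-\mathbf v_0)+O(H\rho^{1+\alpha}).\]
So $h_*\sigma_w$ is, up to an error of size $\rho^{1+\alpha}$, the image of $\sigma$ under the affine map $\mathbf v\mapsto \mathbf s_w'+r_wA_w\mathbf v$, where $A_w=Dh(\mathbf s_w)O_w$ has bounded norm and determinant bounded below.

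Using $a(t_2)u(\mathbf s_w'+r_wA_w\mathbf v)=a(t_2)u(r_wA_w\mathbf v)u(\mathbf s_w')$, I would factor $A_w$ into a bounded element of $\SL(d+1,\R)$ commuting appropriately with $a$. Write $\mathrm{diag}(A_w,1)$, normalized by a determinant scalar absorbed into time. Conjugating, $a(t_2r_w)$ acts on $u(\mathbf v)$ twisted by a bounded element $g_w$. The result is
\[\int f_2(a(t_2)u(\mathbf s)x_0)\,d(h_*\sigma_w)\approx \int f_2\bigl(g_w'\,a(t_2 r_w')u(\mathbf v)x_w\bigr)\,d\sigma(\mathbf v),\]
with $g_w'$ bounded and $x_w=g_w''u(\mathbf s_w')x_0$.

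The additive error from linearization is $t_2\rho^{1+\alpha}$. This forces the correct choice of $\rho$: we need $\rho$ with $t_2\rho^{1+\alpha}\le t_2^{-c}$ and $t_1\rho\le t_1^{-c}$ simultaneously. This is possible because $t_2\ge t_1^{1+\eta}$ only if $\alpha$ is large relative to $\eta$, which it need not be. This is the main obstacle. To get around it, I would instead use \cref{main: single} itself on the pieces, in the form of the almost-random-walk framework of the paper. That framework handles the nonlinear image directly, since each piece $h_*\sigma_w$ is again a $C^{1+\alpha}$ image $h_w=\text{(rescaled) }h\circ\phi_w$ of $\sigma$. The renormalized maps $h_w$ have uniformly controlled constants in \eqref{bdd-assumption}, \eqref{uniform-holder}, since rescaling only improves the Hölder constant. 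The constants in \cref{main: single} depend only on those bounds, so they apply uniformly in $w$.

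\textbf{Step 3: assemble.} Applying the uniform \cref{main: single} to $f_2\circ g_w'$ at time $t_2r_w'\ge t_2^{c'}$, at base point $x_w$, gives an error of
\[C\mathcal S(f_2)\,\mathrm{inj}(x_w)^{-\Delta}(t_2/t_1^{1+\epsilon_1})^{-c}.\]
The condition $t_2\ge t_1^{1+\eta}$ with $\epsilon_1<\eta/2$ makes this $t_2^{-c''}$.

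The injectivity radius factor is handled by non-divergence. The set of $w$ with $\mathrm{inj}(a(t_1)u(\mathbf s_w)x_0)<t_2^{-\kappa}$ has $\nu$-measure at most $t_2^{-c\kappa}$. This follows from a Margulis height function estimate, as in the paper's recurrence results for almost random walks; alternatively, it can be deduced from \cref{main: single} applied to a smooth cutoff of the cusp. On the complement, $\mathrm{inj}^{-\Delta}\le t_2^{\kappa\Delta}$, and choosing $\kappa$ small absorbs this factor.

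Summing the errors, with $\|f_1\|_\infty\le\mathcal S(f_1)$, yields the stated bound.
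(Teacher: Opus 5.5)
The gap is the choice of localization scale in Step 1. With $\rho\approx t_1^{-1-\epsilon_1}$, freezing $f_1(a(t_1)u(\mathbf s)x_0)$ on each piece costs
\[
\ll\mathcal S(f_1)\Vert f_2\Vert_\infty\,t_1\rho\approx\mathcal S(f_1)\mathcal S(f_2)\,t_1^{-\epsilon_1}
\]
(plus a tail term). This error carries no factor $|m_X(f_2)|$. Once $m_X(f_2)=0$ it would have to be $\ll\mathcal S(f_1)\mathcal S(f_2)t_2^{-c}$, and that fails in general: $t_2$ can be arbitrarily large compared with $t_1$ (fix $t_1$ and let $t_2\to\infty$). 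So your closing claim that ``summing the errors yields the stated bound'' does not hold, because the mean-zero part of your bound only decays in $t_1$.

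The same misplaced scale creates your ``main obstacle'' in Step 2, which is not real. The requirements $t_1\rho\le t_2^{-c}$ and $t_2\rho^{1+\alpha}\le t_2^{-c}$ are both upper bounds on $\rho$. The only lower bound is that the renormalized time $t_2\rho$ be a positive power of $t_2$. Since $t_1\le t_2^{1/(1+\eta)}$, the choice $\rho=t_2^{-1/\beta}$ with $1<\beta<1+\min\{\alpha,\eta\}$ makes $t_1\rho$, $t_2\rho^{1+\alpha}$ and $(t_2\rho)^{-1}$ all negative powers of $t_2$. No relation between $\alpha$ and $\eta$ is needed.

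This is the paper's route. It decomposes $\nu=\int(h\circ\phi)_*\sigma\,d\lambda^{*n}(\phi)$ with $n$ fixed and $nl=\omega\ln t_2+(1-\omega)\ln t_1$. It freezes $f_1$ at $h(\mathbf b_\phi)$ at cost $t_1e^{-(1-\frac2M)nl}$, which is a negative power of $t_2$. What remains is bounded by $\mathcal S(f_1)I(n,f_2,t_2,x_0)$, and \eqref{Rd-main bd} controls this by three steps: linearize $h$ on each piece, apply \cref{Rd-equi} at time $t_2r_\phi$, and discard the $\phi$ with small $\inj(x_\phi)$ via \cref{nonlinear-recurr}. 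No uniform version of \cref{main: single} is needed.

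Your two-level alternative, applying \cref{main: single} to the renormalized maps $h_w=r_w^{-1}(h\circ\phi_w-h(\mathbf b_w))$, can be made to work if you take $\rho\approx t_1^{-1}t_2^{-\kappa}$. Two things would then still need doing:
\begin{itemize}
\item You must check, by going through the proof, that the constants of \cref{main: single} depend on $h$ only through $c,C,H,\alpha$ and the normalization $h_w(\mathbf 0)=\mathbf 0$.
\item You must control the injectivity radius at the base point that actually appears. That point is $a(r_\phi^{-1})u(h(\mathbf b_\phi))x_0=k_{Dh(\mathbf b_\phi)O_\phi}x_\phi$, where $x_\phi$ is the almost random walk of \cref{xphi-almost}. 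It is not $a(t_1)u(\mathbf s_w)x_0$. For this point, \cref{nonlinear-recurr} gives the bound $t_2^{-\kappa\delta}$, provided you use a fixed number of steps rather than a stopping time.
\end{itemize}
Your fallback of applying \cref{main: single} at time $t_1$ to a cusp cutoff at level $t_2^{-\kappa}$ does not work. It only gives $t_2^{O(\kappa)}t_1^{-c}$, which is useless when $t_1\ll t_2$.
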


\textit{Proof of \cref{main: K}.} Taking $\Lambda=\text{SL}(d+1,\Z)$, \cref{main: single} and \cref{main: double} give Khintchine's dichotomy by \cite[Theorems 9.2 and 9.3]{BHZ_2026}

\section{Almost random walks}\label{arw-sec}

Before proving equidistribution theorems, we study almost random walks, which need not be genuine random walks but have an approximate convolution structure. This will unveil an important difference between $\sigma$ and $\nu.$ We will show their effective recurrence and equidistribution. The recurrence result will help us prove \cref{main: single} and \cref{main: double}. The equidistribution result could be of independent interest.

Consider a Lie group $G,$ and $\mu$ is a probability measure on $G.$ $G$ acts on a metric space $Z$ and $Y$ is a space of functions on $Z$ with norm $\Vert\cdot\Vert.$  We consider almost random walks in the following sense.

\begin{definition}
\label{almost-def}     $\zeta_n=\lambda_n*\delta_x$ ($\lambda_0=\delta_e$ and $\zeta_0=\delta_x$), where $(\lambda_n)_{n\geq0}$ is a sequence of probability measures on $G,$ is called a $\mu$-almost random walk starting from $x\in Z$ if it satisfies
     \begin{itemize}
         \item Finite moment: there are some $K,\gamma>0$ so that \begin{equation}
             \label{almost-moment}\int_G\Vert\Ad(g)\Vert^\gamma d\lambda_n(g)\leq e^{Kn}<\infty,
         \end{equation}

         \item Exponentially close to convolution: there exist $\theta\in(0,1)$ and $C,c>0$ so that for every  $n\geq1$, with $m=\lceil\theta n\rceil$, and every $f\in Y,$ we have \begin{equation}
        |\zeta_n(f)-\mu^{*m}*\zeta_{n-m}(f)|\leq C\Vert f\Vert e^{-cn}.\label{almost-conv}
    \end{equation}
     \end{itemize}

\end{definition} 

From now on we fix $$G=\SL(d+1,\R),Z=X=G/\Lambda,Y=B_{\infty,p}(X).$$
For a matrix $A\in\text{GL}^{+}(d,\R)$, we denote $$k_{A}:=\diag\{|\det A|^{-\frac{1}{d+1}}A,|\det A|^{-\frac{1}{d+1}}\}\in\text{SL}(d+1,\R).$$

 Now let $\mu$ be the image of $\lambda$ under the anti-isomorphism $\phi\mapsto g_\phi:=k_{O_\phi}^{-1}a(r_\phi^{-1})u(\mathbf b_\phi)\in \text{SL}(d+1,\R)$.
 We will interchange between using $\mu$ and $\lambda$ now and then.

We use the convention $\psi\phi=\psi\circ\phi.$ Correspondingly, $g_{\phi\psi}=g_\psi g_\phi.$

\subsection{Effective recurrence}

This subsection will utilize Margulis height functions to show effective recurrence. Margulis function, generally aiming to show that trajectories of some dynamical
system spend most of the time in compact sets, stems from Eskin-Margulis-Mozes \cite{EMM1998} and Eskin-Margulis \cite{EM2004}, and has subsequently seen many important constructions including random walks on semisimple Lie groups by Benoist-Quint \cite{BenoistQuint2012} and random walks on moduli spaces by Eskin-Mirzakhani-Mohammadi \cite{EMM2015}.

The height function we are going to use is from B{\'e}nard-He-Zhang \cite{BHZ_2026}, which is essentially constructed in Prohaska-Sert-Shi \cite{Prohaska_Sert_Shi_2023}.

We first recall two lemmas about height functions. %as described in \cite[Section 5]{BHZ_2026}, which is essentially from the framework of \cite{Prohaska_Sert_Shi_2023}.

\begin{lemma}[proof of Proposition 5.1 in \cite{BHZ_2026}]\label{h-fcn:def}
    There exists a proper function $\beta:X\to[1,+\infty)$ that satisfies\begin{itemize}
        \item There exist $M,\kappa>0$ so that for every  $n\geq1$ and $x\in X$\begin{equation}
            \label{margulis}\mu^{*n}*\delta_x(\beta)\leq e^{-\kappa n}\beta(x)+M,
        \end{equation}
        \item For every $g\in G,x\in X$\begin{equation}
            \label{growth-rate}\beta(gx)\leq\Vert\Ad(g)\Vert^{O(1)}\beta(x).
        \end{equation}
    \end{itemize}
\end{lemma}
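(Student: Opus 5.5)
The plan follows the Prohaska--Sert--Shi construction of a Margulis function for expanding measures, as adapted in B\'enard--He--Zhang. We work with the natural linear representations $V_i=\wedge^i\R^{d+1}$, $1\le i\le d$. For $x=g\Lambda$ (first with $\Lambda=\SL(d+1,\Z)$; general lattices via Benoist--Quint or a suitable embedding), we let $\beta_i(x)$ be $\max\|g v\|^{-1}$ over primitive pure $i$-vectors $v$ of the lattice $g\Z^{d+1}$. Then we set
\[
\beta=1+\sum_i \epsilon_i\,\beta_i^{\delta},
\]
with small exponent $\delta>0$ and suitably chosen weights $\epsilon_i$. Properness follows from Mahler's criterion, since $\beta_1(x)\to\infty$ exactly when $x$ leaves compact sets. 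The growth bound \eqref{growth-rate} is immediate: $\|g v\|\ge \|g^{-1}\|^{-i}\|v\|$ on $\wedge^i$, and $\|g^{-1}\|$ is bounded by a power of $\|\Ad(g)\|$ on $\SL(d+1,\R)$. This gives $\beta(gx)\le\|\Ad(g)\|^{O(\delta d)}\beta(x)$.

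The contraction \eqref{margulis} is the substantive part, and it rests on an expansion estimate on each $V_i$. The claim is that there exist $n_0$ and $\delta>0$ such that for every unit $v\in V_i$,
\[
\int\|g v\|^{-\delta}\,d\mu^{*n_0}(g)\le \tfrac12 .
\]
For this we use the structure $g_\phi=k_{O_\phi}^{-1}a(r_\phi^{-1})u(\mathbf b_\phi)$. The products $g_{\phi_1}\cdots g_{\phi_n}$ have $a$-part with $\log$ of contraction rate a random walk of drift $l>0$ by \eqref{contra}. Their $u$-part is the translation $\mathbf b$ of the composed similarity, and it equidistributes toward $\sigma$. Irreducibility of $\lambda$ means $\sigma$ gives zero mass to proper affine subspaces. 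This is precisely what prevents $v$ from lying in the contracted subspace of $a(t)$ for a positive-proportion set of $\mathbf s$, i.e.\ the "expanding measure" condition of Simmons--Weiss and Prohaska--Sert--Shi. Uniformity in $v$ comes from compactness of the unit sphere together with continuity, and the finite moment \eqref{lambda-finite} lets us pass from $\|gv\|\to\infty$ in probability to the $L^{\delta}$ bound via dominated convergence for small $\delta$.

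Next we combine the $\beta_i$ using the Eskin--Margulis--Mozes convexity trick. The inequality
\[
\beta_i(x)^2\le C\beta_{i-1}(x)\beta_{i+1}(x)
\]
allows the cross-terms arising from non-primitive interactions to be absorbed, with weights $\epsilon_i$ decreasing suitably. This yields $\mu^{*n_0}*\delta_x(\beta)\le\tfrac12\beta(x)+M_0$. Iterating gives \eqref{margulis} along multiples of $n_0$, with $\kappa=\log 2/n_0$. For intermediate times $n=qn_0+r$ we use \eqref{growth-rate} and the finite moment to bound $\mu^{*r}*\delta_y(\beta)\le C'\beta(y)$ with a uniform constant, provided $\delta$ is small relative to the moment exponent. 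We then adjust $M$ and $\kappa$ accordingly.

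The main obstacle is the uniform expansion estimate, i.e.\ checking that irreducibility of $\lambda$ (with possibly noncompact support and only the moment \eqref{lambda-finite}) gives expansion on every $\wedge^i$ uniformly in $v$. I would handle it via Prohaska--Sert--Shi's criterion for expanding measures, reduced to $\sigma$ not charging proper affine subspaces, as in \cite[Proposition 5.1]{BHZ_2026}.
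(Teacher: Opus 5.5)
Your overall route is the one the paper relies on. The paper gives no argument of its own beyond citing the proof of \cite[Proposition 5.1]{BHZ_2026}, i.e.\ the Prohaska--Sert--Shi/Eskin--Margulis--Mozes height function on $\wedge^i\R^{d+1}$, with expansion coming from $\sigma$ not charging proper affine subspaces. That reason is correct: on $\wedge^i$, $i\le d$, the set of $\mathbf s$ with $u(\mathbf s)v$ in the contracted subspace is a proper affine subspace. However, several steps of your sketch are wrong as written.

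\textbf{The convexity inequality is false.} The inequality $\beta_i(x)^2\le C\beta_{i-1}(x)\beta_{i+1}(x)$ does not hold. By Minkowski's second theorem, $\beta_i$ is comparable to the reciprocal of the product of the first $i$ successive minima. Hence $\beta_i^2/(\beta_{i-1}\beta_{i+1})$ is comparable to the ratio of the $(i+1)$-st to the $i$-th minimum, which is unbounded. For example, for $x=\diag(\epsilon,1,\epsilon^{-1})\Z^3$ one has $\beta_1=\beta_2=\epsilon^{-1}$ and $\beta_0=1$, so $\beta_1^2=\epsilon^{-2}\gg\beta_0\beta_2$.

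What the EMM mechanism actually uses is submodularity $\|v_L\|\,\|v_M\|\ge\|v_{L\cap M}\|\,\|v_{L+M}\|$, and only in the \emph{non-isolated} case. Suppose the minimizing rank-$i$ primitive $L$ has a competitor $M\ne L$ with $\|v_M\|\le T^{O(1)}\|v_L\|$, where $T$ bounds $\|h\|,\|h^{-1}\|$ for the step $h$; then submodularity gives $\beta_i^2\ll_T\beta_{i-j}\beta_{i+j}$. Otherwise, $\beta_i(hx)=\|hv_L\|^{-1}$ for such steps, and the expansion estimate applies directly. Large steps are handled by \eqref{growth-rate} and the moment, which is needed because $\mu$ need not be compactly supported. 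Accordingly, the weights must be log-convex in $i$, e.g.\ $\epsilon_i=\epsilon^{i(d+1-i)}$, which satisfies $\epsilon_i^2=\epsilon^{2j^2}\epsilon_{i-j}\epsilon_{i+j}$. \emph{Decreasing} weights do not absorb the cross terms.

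\textbf{The last step does not give \eqref{margulis} as stated.} From $\mu^{*n_0}*\delta_x(\beta)\le\frac12\beta(x)+M_0$ and $\mu^{*r}*\delta_y(\beta)\le C'\beta(y)$ you only get $\mu^{*n}*\delta_x(\beta)\le 2C'e^{-\kappa n}\beta(x)+M$. No adjustment of $\kappa$ and $M$ removes the factor $2C'$ for small $n$: at $n=1$ the statement demands a genuine one-step contraction. A standard fix is to replace $\beta$ by $\tilde\beta=\sum_{j=0}^{n_0-1}e^{\kappa' j}\mu^{*j}*\delta_x(\beta)$ with $0<\kappa'<\kappa$. Then $\beta\le\tilde\beta\ll\beta$ and $\mu*\delta_x(\tilde\beta)\le e^{-\kappa'}\tilde\beta(x)+e^{\kappa'(n_0-1)}M_0$. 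The cost is a multiplicative constant in \eqref{growth-rate}. Alternatively, note that such constants are harmless in \cref{prop-recurr}.

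\textbf{The uniform expansion bound needs a different argument.} The bound $\int\|gv\|^{-\delta}\,d\mu^{*n_0}(g)\le\frac12$ does not follow from $\|gv\|\to\infty$ in probability by dominated convergence, since $\|gv\|^{-\delta}$ has no domination uniform in $n_0$. The standard route is a uniform positive drift $\inf_{\|v\|=1}\int\log\|gv\|\,d\mu^{*n_0}(g)>0$, followed by a second-order expansion of $e^{-\delta t}$ using the exponential moment. This is what the Prohaska--Sert--Shi criterion you invoke actually supplies.
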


\begin{lemma}[Lemma 5.4 in \cite{BHZ_2026}]\label{h-inj}

For any proper function $\beta':X\to[1,+\infty)$ that satisfies the two conditions in \cref{h-fcn:def}, there exist $c,C>0$ so that for all $x\in X,$ \begin{equation}
    \beta'(x)^{-c}\ll\inj(x)\ll \beta'(x)^{-C}.
\end{equation} 

\end{lemma}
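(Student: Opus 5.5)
The two inequalities come from different hypotheses. The lower bound $\beta'(x)\gg\inj(x)^{-c'}$ will come from the drift inequality \eqref{margulis} combined with properness. The upper bound $\beta'(x)\ll\inj(x)^{-C'}$ will come from the growth bound \eqref{growth-rate} and reduction theory. (The two exponents are then $c=1/C'$ and $C=c'$ in the notation of the statement.) Throughout I use the standard fact that for $g\in G$, $x\in X$,
$$\|\Ad(g)\|^{-O(1)}\inj(x)\ll \inj(gx)\ll \|\Ad(g)\|^{O(1)}\inj(x).$$
The second inequality follows from the first applied to $g^{-1}$ and $gx$, using $\|\Ad(g^{-1})\|\le\|\Ad(g)\|^{O(1)}$ in $\SL(d+1,\R)$.

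\emph{Upper bound on $\beta'$.} First, $\beta'$ is bounded on every compact set. Indeed, \eqref{growth-rate} gives $\beta'(gx)\le \|\Ad g\|^{O(1)}\beta'(x)$, so $\beta'$ is bounded on a neighbourhood $Ux$ of every point, and compactness finishes it. Next, reduction theory for the lattice $\Lambda$ (a Siegel-set description of $X$, Mahler's criterion in the case $\Lambda=\SL(d+1,\Z)$) gives a fixed compact set $K\subset X$ with the following property: every $x$ can be written $x=g^{-1}y$ with $y\in K$ and $\|\Ad(g^{-1})\|\le \inj(x)^{-O(1)}$. One sees this by contracting the diagonal part of the Iwasawa coordinate of $x$ in a Siegel set, whose size is polynomial in the shortest vector, hence in $\inj(x)^{-1}$. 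Then \eqref{growth-rate} gives
$$\beta'(x)\le \|\Ad(g^{-1})\|^{O(1)}\sup_K\beta'\ll \inj(x)^{-O(1)}.$$
I expect this reduction-theory step, with honest polynomial control for a general lattice $\Lambda$, to be the main technical point. If needed I would cite the analogous estimate established for the specific function $\beta$ of \cref{h-fcn:def}.

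\emph{Lower bound on $\beta'$.} Properness of $\beta'$ makes $\{\beta'\le R\}$ compact, so for each $R$ there is $\varepsilon(R)>0$ with $\inj\ge\varepsilon(R)$ on $\{\beta'\le R\}$. Fix $R=2M+4$, $\varepsilon=\varepsilon(R)$, and let $x$ have small $\inj(x)$.
\begin{enumerate}
\item By the moment condition \eqref{lambda-finite}, $\mu$ has a finite exponential moment $\int\|\Ad g\|^{\gamma}\,d\mu<\infty$. Markov's inequality then gives $L>0$ with $\mu^{*n}\{\|\Ad g\|> e^{Ln}\}\le 1/2$ for all $n$.
\item For $g$ with $\|\Ad g\|\le e^{Ln}$ we have $\inj(gx)\le e^{O(Ln)}\inj(x)$. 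Choose $n=\lfloor c_0\log(1/\inj(x))\rfloor$ with $c_0$ small, so that this is $<\varepsilon$.
\item For such $g$ this forces $\beta'(gx)>R$, so $\mu^{*n}*\delta_x(\beta')\ge R/2=M+2$.
\item Comparing with \eqref{margulis} gives $e^{-\kappa n}\beta'(x)\ge 2$, hence $\beta'(x)\ge e^{\kappa n}\gg \inj(x)^{-\kappa c_0}$.
\end{enumerate}
For $\inj(x)$ bounded below the claim is trivial, since $\beta'\ge1$ and $\beta'$ is bounded on the corresponding compact set. Combining the two bounds gives $\beta'(x)^{-c}\ll\inj(x)\ll\beta'(x)^{-C}$.
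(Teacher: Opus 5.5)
The paper gives no proof of this lemma. It is imported from Lemma 5.4 of \cite{BHZ_2026} and used as a black box in \cref{prop-recurr}, so there is no in-paper argument to compare with. Your proof is correct, and it divides the work in the natural way.

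\emph{Left inequality.} This is a polynomial lower bound on $\beta'$. It follows from the two conditions plus properness, via your observation that the walk needs $\gg\log(1/\inj(x))$ steps to leave the thin part.

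\emph{Right inequality.} This is a polynomial upper bound on $\beta'$, and it needs geometric input about the cusps of $X$. The reason is that \eqref{growth-rate} only bounds $\log\beta'$ by the distance to a base point, so one must also know that $\inj$ decays exponentially in that distance. Your reduction-theory step supplies exactly this. For a general lattice $\Lambda$ it rests on Siegel sets (after Margulis arithmeticity) when $d\geq2$, and on the cusp structure of lattices in $\SL(2,\R)$ when $d=1$.

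Two corrections. First, the exponent translation in your opening parenthetical is reversed. From $\beta'\gg\inj^{-c'}$ you get $\beta'^{-1/c'}\ll\inj$, and from $\beta'\ll\inj^{-C'}$ you get $\inj\ll\beta'^{-1/C'}$. So the constants are $c=1/c'$ and $C=1/C'$, not $c=1/C'$ and $C=c'$.

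Second, your fallback of citing the estimate for the specific $\beta$ of \cref{h-fcn:def} does not by itself handle an arbitrary $\beta'$. You need a comparison step, which the drift provides:
\begin{itemize}
\item Take $n=\lceil\kappa^{-1}\log\beta(x)\rceil$.
\item By \eqref{margulis}, Markov's inequality and your moment bound (with $L$ enlarged so each exceptional set has measure at most $1/4$), some $g$ in the support of $\mu^{*n}$ satisfies $\beta(gx)\leq4(M+1)$ and $\Vert\Ad g\Vert\leq e^{Ln}\ll\beta(x)^{L/\kappa}$.
\item Then \eqref{growth-rate} for $\beta'$, together with boundedness of $\beta'$ on the compact set $\{\beta\leq4(M+1)\}$, gives $\beta'(x)\leq\Vert\Ad g^{-1}\Vert^{O(1)}\beta'(gx)\ll\beta(x)^{O(1)}$.
\end{itemize}
Hence all functions satisfying the two conditions are polynomially equivalent, and the right inequality need only be checked for one of them.

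The same device, applied with $\beta'$ in place of $\beta$, also gives a slightly shorter proof of the left inequality. Here $gx$ lies in a fixed compact sublevel set of $\beta'$, on which $\inj$ is bounded below, so $\inj(x)\gg\Vert\Ad g\Vert^{-O(1)}\inj(gx)\gg\beta'(x)^{-O(1)}$.
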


To utilize the close-to-convolution property \eqref{almost-conv} for smooth functions, we mollify the height function as follows,
\begin{lemma}\label{sm}
    We may assume that the height function $\beta$ in \cref{h-fcn:def}(that satisfies \eqref{margulis} and \eqref{growth-rate}) is smooth. Further, for every differential operator $D,$ one has\begin{equation}\label{diff-estimate}|D\beta|\ll_D\beta.\end{equation}
\end{lemma}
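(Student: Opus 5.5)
The plan is to replace $\beta$ by a convolution $\tilde\beta:=\rho*\beta$, where $\rho$ is a fixed smooth bump on $G$. We then check that $\tilde\beta$ still satisfies \eqref{margulis} and \eqref{growth-rate}, possibly with worse constants, and that it obeys \eqref{diff-estimate}. Fix a nonnegative $\rho\in C_c^\infty(G)$ supported in a small compact neighborhood $B$ of $e$ with $\int_G\rho=1$, and set
\begin{equation*}
\tilde\beta(x):=\int_G\rho(h)\,\beta(h^{-1}x)\,dh .
\end{equation*}
Since $\beta\ge1$, we have $\tilde\beta\ge1$. By \eqref{growth-rate}, $\beta(h^{-1}x)$ and $\beta(x)$ agree up to a multiplicative constant $A=\sup_{h\in B}\Vert\Ad(h)\Vert^{O(1)}$ uniformly in $h\in B$. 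Hence $A^{-1}\beta\le\tilde\beta\le A\beta$, and $\tilde\beta$ is proper. Smoothness is immediate because the derivatives fall on $\rho$ in the convolution.

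For \eqref{diff-estimate}, take a left-invariant (or right-invariant, matching the convention of the Lie algebra basis) differential operator $D$ acting on $x$. Writing $x=gx'$ and moving $D$ onto $\rho$ gives
\begin{equation*}
D\tilde\beta(x)=\int_G (D'\rho)(h)\,\beta(h^{-1}x)\,dh ,
\end{equation*}
where $D'$ is the transported operator. The derivatives of $\rho$ are bounded and supported in $B$, so $|D\tilde\beta(x)|\ll_D\sup_{h\in B}\beta(h^{-1}x)\ll\beta(x)\ll\tilde\beta(x)$. The only care needed is the choice of side. With $X=G/\Lambda$ and $G$ acting on the left, a derivative along $Y\in\mathfrak g$ at $x$ is $\frac{d}{d\epsilon}\tilde\beta(\exp(\epsilon Y)x)$. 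This becomes $\int\rho(h)\beta(h^{-1}\exp(\epsilon Y)x)\,dh$, and substituting $h\mapsto\exp(\epsilon Y)h$ puts the derivative on $\rho$ as a right-invariant vector field. Iterating this handles all higher-order $D$.

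For \eqref{growth-rate}, write
\begin{equation*}
\tilde\beta(gx)\le A\beta(gx)\le A\Vert\Ad g\Vert^{O(1)}\beta(x)\le A^2\Vert\Ad g\Vert^{O(1)}\tilde\beta(x).
\end{equation*}
Since $\Vert\Ad g\Vert\ge1$ for $g\in\SL(d+1,\R)$, the constant $A^2$ is absorbed by raising $\beta$ to a power. To do this, replace $\tilde\beta$ by $\tilde\beta$ times a constant, or observe that the paper only uses $O(1)$ exponents together with implied constants. If exact form is needed, first replace $\beta$ by a power $\beta^s$ and use $\Vert\Ad g\Vert\ge1$ with slack. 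I would simply state \eqref{growth-rate} with an implicit multiplicative constant $\ll$, which is harmless for every later use.

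The main obstacle is \eqref{margulis}, because convolution in $h$ does not commute with the random walk $\mu^{*n}$ acting on the left. Naively,
\begin{equation*}
\mu^{*n}*\delta_x(\tilde\beta)=\int\rho(h)\int\beta(h^{-1}gx)\,d\mu^{*n}(g)\,dh ,
\end{equation*}
and $h^{-1}g\neq gh^{-1}$. The way around this is to avoid commuting at all and use comparability: $\mu^{*n}*\delta_x(\tilde\beta)\le A\,\mu^{*n}*\delta_x(\beta)\le A e^{-\kappa n}\beta(x)+AM\le A^2e^{-\kappa n}\tilde\beta(x)+AM$. This gives $e^{-\kappa n}$ only with a constant $A^2$ in front. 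To remove $A^2$, fix $n_0$ with $A^2e^{-\kappa n_0}\le e^{-\kappa n_0/2}$. For $n\ge n_0$ the bound then holds with $\kappa/2$. For $n<n_0$, the finite moment assumption \eqref{lambda-finite} on $\mu$ does not give a uniform contraction, so I would instead define
\begin{equation*}
\beta^\sharp:=\sum_{j=0}^{n_0-1}e^{\kappa' j}\,\mu^{*j}*\tilde\beta
\end{equation*}
as in the standard Eskin–Margulis averaging trick. This satisfies the one-step inequality $\mu*\beta^\sharp\le e^{-\kappa'}\beta^\sharp+M'$ and hence \eqref{margulis} for all $n$ by iteration. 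If $\mu$ is compactly supported, $\beta^\sharp$ remains smooth with \eqref{diff-estimate}, by the same left/right argument applied to each term. For the finite exponential moment case, the derivative bounds must be controlled through \eqref{growth-rate} integrated against $\Vert\Ad g\Vert^{O(1)}$. This is the delicate point: it needs $\mu$ to have a small exponential moment, which \eqref{lambda-finite} provides after taking $\beta$ to a small power. Then \cref{h-inj} applies unchanged to $\beta^\sharp$.
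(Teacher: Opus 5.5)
Your mollification $\tilde\beta=\rho*\beta$, the comparison $A^{-1}\beta\le\tilde\beta\le A\beta$, and your proof of smoothness and of \eqref{diff-estimate} (moving derivatives onto $\rho$ as right-invariant vector fields) match the paper's argument. The gap is in your treatment of \eqref{margulis}. You regard $A$ as a fixed constant and remove the factor $A^2$ in front of $e^{-\kappa n}$ with the Eskin--Margulis average $\beta^\sharp=\sum_{j<n_0}e^{\kappa' j}\mu^{*j}*\tilde\beta$. That restores the contraction inequality, but it destroys the derivative control you need.

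For $Y\in\mathrm{Lie}(G)$ one has $Y\bigl(\tilde\beta(g\,\cdot)\bigr)(x)=(\Ad(g)Y)\tilde\beta(gx)$. So an order-$k$ operator $D$ applied to $\mu^{*j}*\tilde\beta$ produces $\int\Vert\Ad g\Vert^{k}\,\tilde\beta(gx)\,d\mu^{*j}(g)$. Bounding this by $\tilde\beta(x)$ via \eqref{growth-rate} requires $\int\Vert\Ad g\Vert^{k+O(1)}\,d\mu^{*j}(g)<\infty$ for every $k$. Under \eqref{lambda-finite}, $\mu$ has only a small moment. Replacing $\beta$ by a small power shrinks the $O(1)$ growth exponent, but not the factor $\Vert\Ad g\Vert^{k}$ coming from the chain rule. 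So your argument gives the lemma only when $\mu$ is compactly supported (or has enough moments), not in the generality of the paper. Averaging first and mollifying afterwards does not help either, since the mollification reintroduces the factor $A^2$.

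The missing observation is that $A$ is not a fixed constant. It can be taken as $A=\sup_{h\in B}\max\{\Vert\Ad h\Vert,\Vert\Ad h^{-1}\Vert\}^{O(1)}$; you need both $h$ and $h^{-1}$ for the two-sided comparison, or take $B$ symmetric. This $A$ tends to $1$ as $B$ shrinks to $\{e\}$, because $\Ad(h)\to I$. Choose $B$ so small that $A^2\le e^{\kappa/2}$. Then $A^2e^{-\kappa n}\le e^{-\kappa n/2}$ for every $n\ge1$, so $\tilde\beta$ satisfies \eqref{margulis} with $\kappa/2$ and $AM$, with no averaging and no extra moments. This is exactly the paper's ``shrink $B$, $C_B\to1$'' step.

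On \eqref{growth-rate}: a constant $A^2>1$ cannot be absorbed into a power of $\Vert\Ad g\Vert$, since the latter can equal $1$. However, your fallback of allowing an implied multiplicative constant is the same level of precision as the paper's ``by definition''.
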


\begin{proof}
    Take a nonempty bounded open set $B\subset G,$ and a smooth bump function $\rho_B$ on $B.$ Now define$$\tilde \beta(x):=\int_G\rho_B(g)\beta(g^{-1}x)dg,$$where $dg$ is the (left) Haar measure on $G$. Then for some constant $C_B>0$ \begin{equation}\label{cb-def}C_B^{-1}\tilde\beta(x)\leq\beta(x)\leq C_B\tilde\beta(x),\forall x\in X.
    \end{equation}Note that $C_B\to1$ when $B$ shrinks. Moreover, for $u\in\text{Lie}(G),\tilde\beta(\exp(tu)x)=\int_G\rho_B(\exp(tu)g)\beta(g^{-1}x)dg$ for small $t$ by left-invariance of Haar measure. As a consequence $\tilde\beta$ is smooth and by iterating the process above, for every (left) differential operator $D,$ $$|D\tilde\beta(x)|\leq\int_G|D\rho_B(g)|\beta(gx)dg\ll_D\beta(x)\overset{\eqref{cb-def}}{\ll}\tilde\beta(x),$$which gives \eqref{diff-estimate}. Moreover, $\tilde\beta$ still satisfies \eqref{growth-rate} by definition, while \eqref{cb-def} gives $\mu^{*n}*\delta_x(\tilde\beta)\leq C_B^{2}(e^{-\kappa n}\tilde\beta(x)+M)$, thus \eqref{margulis} is still satisfied by shrinking $B$ if necessary.
\end{proof}

Then using such a height function, we prove the following effective recurrence of $\mu$-almost random walk.

\begin{proposition}\label{prop-recurr}
    There exist constants $\Delta,\delta>0$ so that for $n\geq0$ and $0<\rho\leq1$,
    $$\zeta_n\{\text{inj}\leq\rho\}\ll\rho^{\delta}(1+e^{-\delta n}\text{inj}(x)^{-\Delta}).$$
\end{proposition}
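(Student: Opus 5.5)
By \cref{h-inj} applied to the smooth height function $\beta$ of \cref{sm}, we have $\{\inj\le\rho\}\subset\{\beta\ge c_0\rho^{-c'}\}$ for some constants $c_0,c'>0$. So by Markov's inequality it suffices to prove a moment bound of the form
\begin{equation*}
\zeta_n(\beta^{s})\ll 1+e^{-\delta' n}\beta(x)^{s}
\end{equation*}
for some small $s\in(0,1]$ and some $\delta'>0$. Then $\zeta_n\{\inj\le\rho\}\ll\rho^{c's}\bigl(1+e^{-\delta'n}\beta(x)^{s}\bigr)$. The upper bound $\beta(x)\ll\inj(x)^{-1/c}$ from \cref{h-inj} then gives the claim with $\delta=\min(c's,\delta')$ and $\Delta=s/c$.

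To get the moment bound I will work with $\beta$ directly; one could also take $\beta^s$, which still satisfies \eqref{margulis} by Jensen and \eqref{growth-rate}. Fix $n$ and set $m=\lceil\theta n\rceil$.

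\textbf{Step 1: apply the convolution property to a truncation.} The function $\beta$ is unbounded, so \eqref{almost-conv} cannot be applied to it directly. Instead, take $f_R=\beta\cdot\chi_R$, where $\chi_R$ is a smooth cutoff equal to $1$ on $\{\beta\le R\}$ and supported in $\{\beta\le 2R\}$. I would build $\chi_R$ as a composition $\chi(\beta/R)$; then \eqref{diff-estimate} gives $\mathcal S_{\infty,p}(f_R)\ll R$. Applying \eqref{almost-conv} yields
\begin{equation*}
\zeta_n(f_R)\le \mu^{*m}*\zeta_{n-m}(\beta)+CRe^{-cn}\le e^{-\kappa m}\zeta_{n-m}(\beta)+M+CRe^{-cn},
\end{equation*}
using \eqref{margulis} pointwise and Fubini.

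\textbf{Step 2: control the tail $\zeta_n(\beta\mathbf 1_{\beta>R})$.} This is the main obstacle. Here I use the moment condition \eqref{almost-moment} together with \eqref{growth-rate}: $\beta(gx)\le\Vert\Ad g\Vert^{A}\beta(x)$. Choose $s\le\gamma/(2A)$ and work with $\beta^s$ in place of $\beta$. Then $\zeta_n(\beta^{2s})\le e^{Kn}\beta(x)^{2s}$, and Cauchy–Schwarz plus Chebyshev give
\begin{equation*}
\zeta_n(\beta^s\mathbf 1_{\beta^s>R})\le \zeta_n(\beta^{2s})/R\le e^{Kn}\beta(x)^{2s}/R.
\end{equation*}

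\textbf{Step 3: choose $R$ and iterate.} Take $R=e^{c n/2}$ (adjusting $c$ for the $s$-power), so the error in Step 1 is $e^{-cn/2}$. The tail in Step 2 is only useful when $e^{Kn}\beta(x)^{2s}\le e^{cn/4}$, which is not automatic. To handle this, I would first run the argument for $x$ with $\beta(x)\le e^{\epsilon n}$, and use the trivial bound $\zeta_n(\beta^s)\le e^{Kn}\beta(x)^s$ otherwise. If $K$ is too large relative to $c$, shrink $s$ further: by Jensen, $K$ scales linearly in $s$ while the truncation error does not. This is the step I expect to need care.

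With this in hand, write $u_n=\sup_{k\le n}\zeta_k(\beta^s)$. The recursion takes the form
\begin{equation*}
u_n\le e^{-\kappa'\theta n}u_{(1-\theta)n}+M'+\text{(small)}.
\end{equation*}
Iterating it $O(\log n)$ times down to times of order $1$, where $\zeta_k(\beta^s)\le e^{K}\beta(x)^s$, gives $\zeta_n(\beta^s)\ll 1+e^{-\delta'n}\beta(x)^s$. The case $n$ small is trivial from the same bound. Finally, feed this into the Markov reduction in the first paragraph.
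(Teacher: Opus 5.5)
Your strategy matches the paper's: the Markov reduction through \cref{h-inj}, a smooth truncation of $\beta^s$ with $\mathcal S_{\infty,p}$-norm $O(R)$ fed into \eqref{almost-conv}, a tail bound from a higher moment supplied by \eqref{almost-moment} and \eqref{growth-rate}, the contraction \eqref{margulis} (via Jensen) on the convolution term, and induction in $n$. Your one-sided use of the truncation, $\zeta_n(f_R)\le\mu^{*m}*\zeta_{n-m}(\beta^s)+CRe^{-cn}$, is slightly more economical than the paper's two-sided comparison, because it never needs the tail of $\mu^{*m}*\zeta_{n-m}$.

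The gap is Step 3. Steps 1--2, carried out for $\beta^s$, give for every $R\ge1$
\begin{equation*}
\zeta_n(\beta^s)\le e^{-\kappa sm}\zeta_{n-m}(\beta^s)+M^s+CRe^{-cn}+e^{K'n}\beta(x)^{2s}R^{-1},
\end{equation*}
with $K'\to0$ as $s\to0$. Having fixed $R=e^{cn/2}$, you try to handle the $\beta(x)^{2s}$ term by a case split. As written, this does not prove the moment bound you state, for two reasons. First, for $\beta(x)>e^{\epsilon n}$ the trivial bound $e^{Kn}\beta(x)^s$ is not $\ll1+e^{-\delta'n}\beta(x)^s$. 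Second, the condition $\beta(x)\le e^{\epsilon n}$ is not inherited at the time $n-m$ you recurse to. The recursion for $u_n=\sup_{k\le n}\zeta_k(\beta^s)$ also cannot work, since $u_n\ge\zeta_0(\beta^s)=\beta(x)^s$ for every $n$. The induction has to be run on $\zeta_n(\beta^s)$ itself.

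The missing point is that the truncation level should scale with $\beta(x)$. Take $R=e^{(c+K')n/2}\beta(x)^s$ as the level for $\beta^s$. Then the last two terms are both $\ll e^{-\Upsilon n}\beta(x)^s$ with $\Upsilon=(c-K')/2>0$. In your normalization, this is the paper's choice $R=e^{(c/v+L)n}\beta(x)$ for the level of $\beta$, with the tail handled by $\beta^s\mathbf 1_{\{\beta>R\}}\le\beta^vR^{s-v}$ (your $2s$ plays the role of $v$).

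With this choice, fix $\delta<\min\{\kappa s,\Upsilon\}$ and assume $\zeta_k(\beta^s)\le A(1+e^{-\delta k}\beta(x)^s)$ for $k<n$. This propagates to $k=n$ for all $n\ge n_0$, and the range $n<n_0$ is covered by the moment bound after enlarging $A$. No case distinction is needed.

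Alternatively, your worry was unnecessary: keep $R=e^{cn/2}$ and run the same induction with the weaker target $A(1+e^{-\delta n}\beta(x)^{2s})$, where $\delta<\min\{\kappa s,c/2-K'\}$. It closes, and since $\Delta$ is at your disposal, this weaker moment bound suffices.

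One small slip: in the notation of \cref{h-inj} the upper bound is $\beta\ll\inj^{-1/C}$, so you get $\Delta=s/C$ rather than $s/c$.
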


\begin{proof}
    Let $\beta$ be as in \cref{sm}. %, for some $A\geq1$ and $M_0>0$,
    %$$\mu^{*n}*\delta_y(\beta)\leq A e^{-\kappa n}\beta(y)+M_0.$$
    For $0<s\leq1$, Jensen's inequality gives
    \begin{equation}\label{s-contraction}\mu^{*n}*\delta_y(\beta^s)\leq  e^{-\kappa sn}\beta(y)^s+M^s.\end{equation}
    By \cref{h-inj} and Markov's inequality, it suffices to prove
    \begin{equation}\label{main-recurr}
        \zeta_n(\beta^s)\ll e^{-\delta n}\beta(x)^s+1.
    \end{equation}

    Choose $v>0$ small enough in terms of the moment exponents in \mbox{%DIFAUXCMD
\eqref{almost-moment} and \eqref{lambda-finite} }\hskip0pt%DIFAUXCMD
and the growth exponent in \mbox{%DIFAUXCMD
\eqref{growth-rate}}\hskip0pt%DIFAUXCMD
. These estimates and submultiplicativity of $\Vert\Ad(\cdot)\Vert$ give, for some $L>0$ and $m=\lceil\theta n\rceil$,
    $$\zeta_n(\beta^v)+\mu^{*m}*\zeta_{n-m}(\beta^v)
      \ll e^{Lvn}\beta(x)^v.$$
    Let $0<s<v$ with $s\leq1$. Truncating $\beta^s$ smoothly at $\beta=R$, the derivative estimates \eqref{diff-estimate} give a function with $\mathcal S_{\infty,p}$-norm $O(R^s)$. Applying \mbox{%DIFAUXCMD
\eqref{almost-conv} }\hskip0pt%DIFAUXCMD
to this function and estimating both tails by $\beta^s\mathbf1_{\{\beta>R\}}\leq\beta^vR^{s-v}$, we obtain
    \begin{equation}\label{trunca}
        |\zeta_n(\beta^s)-\mu^{*m}*\zeta_{n-m}(\beta^s)|
        \ll R^se^{-cn}+e^{Lvn}\beta(x)^vR^{s-v}.
    \end{equation}
    Now choose
    $$R=e^{(c/v+L)n}\beta(x),\qquad
      0<s<\min\{1,v,cv/(c+Lv)\},\qquad
      \Upsilon=c-s(c/v+L)>0.$$
    We have from \eqref{trunca} that
    \begin{equation}\label{almost-tri}
        |\zeta_n(\beta^s)-\mu^{*m}*\zeta_{n-m}(\beta^s)|
        \ll e^{-\Upsilon n}\beta(x)^s.
    \end{equation}

    On the other hand, the contraction estimate \eqref{s-contraction} shows that
    \begin{equation}\label{conv-tri}
        \mu^{*m}*\zeta_{n-m}(\beta^s)
        \leq e^{-\kappa sm}\zeta_{n-m}(\beta^s)+M^s.
    \end{equation}
    Choose $0<\delta<\min\{\Upsilon,\kappa s\}$. Thus \mbox{%DIFAUXCMD
\eqref{conv-tri} and \eqref{almost-tri} }\hskip0pt%DIFAUXCMD
and the triangle inequality prove \mbox{%DIFAUXCMD
\eqref{main-recurr} }\hskip0pt%DIFAUXCMD
by induction. \end{proof}

\subsection{Effective equidistribution}

As an application of the established effective recurrence, we prove effective equidistribution of $\mu$-almost random walks by combining it with the following equidistribution of the $\mu$-random walk established in \cite{BHZ_2026}.

%Although this equidistribution will not contribute to the proof of the main theorem, it might be of independent interest.

\begin{theorem}[Theorem 1.3 in \cite{BHZ_2026}]
    Let $\mu$ be as defined above. There exists a constant $c=c(\mu,\Lambda)>0$ so that for every $n\geq1$, $f\in B_{\infty,p}(X)$ and $x\in X,$ we have \begin{equation}
        |\mu^{*n}*\delta_x(f)-m_X(f)|\ll\mathcal S_{\infty,p}(f)\inj(x)^{-1}e^{-cn}.\label{conv-equi}
    \end{equation} 
\end{theorem}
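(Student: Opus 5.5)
This estimate is quoted from \cite{BHZ_2026}, so the paper itself will not reprove it. If I had to derive it from what is already on the table, I would deduce it from the expanding-translate equidistribution \cref{Rd-equi}. The basic tool is the algebraic identity relating $g_\psi$ to $a(\cdot)u(\cdot)$. For a composition $\psi=\phi_1\circ\cdots\circ\phi_n$ of $n$ i.i.d.\ $\lambda$-distributed similarities, $\mu^{*n}*\delta_x$ is the law of
\[
g_\psi x=k_{O_\psi}^{-1}a(r_\psi^{-1})u(\mathbf b_\psi)x .
\]
The compact factor $k_{O_\psi}^{-1}$ costs at most $O(\mathcal S_{\infty,p}(f))$ times a harmless change of test function, so the task is to show that $a(r_\psi^{-1})u(\mathbf b_\psi)x$ equidistributes. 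Since $\mathbf b_\psi=\psi(\mathbf 0)$, its law converges to $\sigma$, but it is correlated with $r_\psi$. This correlation is the main obstacle.

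To decorrelate, I would split $\psi=\psi_2\circ\psi_1$, where $\psi_2$ consists of the last $n_2=\lceil\varepsilon n\rceil$ maps applied and $\psi_1$ of the remaining $n_1$. Then $\psi(\mathbf 0)=\mathbf b_{\psi_2}+r_{\psi_2}O_{\psi_2}\psi_1(\mathbf 0)$. Using $u(r O\mathbf z)=k_O a(r)\,u(\mathbf z)\,a(r)^{-1}k_O^{-1}$ and $a(t)u(\mathbf v)=u(t\mathbf v)a(t)$, I would rewrite
\[
g_\psi x=h_{\psi}\,a(r_{\psi_1}^{-1})\,u(\psi_1(\mathbf 0))\,h'_{\psi_2}x,
\]
where $h'_{\psi_2}=(k_{O_{\psi_2}}a(r_{\psi_2}))^{-1}$. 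Here $h_\psi$ is a product of $k$'s and of an element whose $\Vert\Ad\Vert$ is controlled by the moments of $\psi_2$ only. Next I would condition on $\psi_2$, which is independent of $\psi_1$, and run a second splitting inside $\psi_1$: by stationarity \eqref{linear-sta}, $\psi_1(\mathbf s)$ with $\mathbf s\sim\sigma$ independent is exactly $\sigma$-distributed. Moreover, $\Vert\psi_1(\mathbf 0)-\psi_1(\mathbf s)\Vert\le r_{\psi_1}\Vert\mathbf s\Vert$, and after multiplication by $a(r_{\psi_1}^{-1})$ this is still only an $O(1)$ unipotent perturbation. I would therefore reorganize this so that the point fed into \cref{Rd-equi} is a genuinely $\sigma$-distributed variable, with the remaining correlation confined to the scalar time $t=r_{\psi_1}^{-1}$.

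To handle the random time, I would use large deviations for $\log r_{\psi_1}=\sum\log r_{\phi_i}$, which is available from \eqref{lambda-finite} and \eqref{contra}. With probability $1-e^{-c n}$ we have $r_{\psi_1}^{-1}\ge e^{ln_1/2}$. On that event, \cref{Rd-equi} applied at the base point $h'_{\psi_2}x$ gives error
\[
\mathcal S_{\infty,p}(f)\,\inj(h'_{\psi_2}x)^{-1}e^{-cln_1/2}.
\]
By the moment bound, $\inj(h'_{\psi_2}x)\ge e^{-O(n_2)}\inj(x)$ off an exponentially small set. Finally, the left factor $h_\psi$ has $\Vert\Ad\Vert\le e^{O(\varepsilon n)}$ off another small set, so $f\circ h_\psi$ has $\mathcal S_{\infty,p}$-norm at most $e^{O(\varepsilon n)}\mathcal S_{\infty,p}(f)$, and its Haar integral equals $m_X(f)$. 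Choosing $\varepsilon$ small compared with $c\,l$ absorbs all these losses and gives \eqref{conv-equi}.

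The step I expect to be hardest is the decorrelation in the second step. The naive replacement of $\mathbf b_\psi$ by a $\sigma$-random point fails because $a(r^{-1})$ magnifies errors at scale exactly $r$, so the conditioning has to be arranged so that the magnification is compensated by conjugation rather than estimated crudely. If the argument above cannot make this work, I would fall back on the route of \cite{BHZ_2026}: run their dimension-bootstrap argument directly on the random walk, using \eqref{margulis} to control the cusp.
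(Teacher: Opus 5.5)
\textbf{Verdict: there is a genuine gap, and the reduction to \cref{Rd-equi} cannot work as proposed.} The paper gives no proof of this estimate. It is quoted from \cite{BHZ_2026} and used as a black box in \cref{arw-sec}.

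\textbf{An algebraic error in the splitting.} With your choice $h'_{\psi_2}=(k_{O_{\psi_2}}a(r_{\psi_2}))^{-1}$, the left factor is forced to be
\[
h_\psi=k_{O_\psi}^{-1}\,u\bigl(r_\psi^{-1}\mathbf b_{\psi_2}\bigr)\,k_{O_{\psi_2}}.
\]
So $\Vert\Ad(h_\psi)\Vert$ grows like a power of $r_\psi^{-1}\Vert\mathbf b_{\psi_2}\Vert\approx e^{ln}\Vert\mathbf b_{\psi_2}\Vert$. It is not controlled by $\psi_2$ alone, and composing with $h_\psi$ costs a factor exponential in $n$, not in $\varepsilon n$. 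The correct version of your splitting is the Markov property $g_\psi x=g_{\psi_1}(g_{\psi_2}x)$ with $g_{\psi_1}=k_{O_{\psi_1}}^{-1}a(r_{\psi_1}^{-1})u(\psi_1(\mathbf 0))$. That only moves the base point and leaves you with the original problem for $n_1$ steps.

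\textbf{The decorrelation step fails, and it is the whole content of the theorem.} The identity behind \cref{idea} is $a(t)u(\psi(\mathbf s))y=k_{O_\psi}a(tr_\psi)u(\mathbf s)g_\psi y$. Take it at $t=r_{\psi_1}^{-1}$. Replacing $\psi_1(\mathbf 0)$ by $\psi_1(\mathbf s)$, $\mathbf s\sim\sigma$, then turns $f(a(r_{\psi_1}^{-1})u(\psi_1(\mathbf 0))y)=f(k_{O_{\psi_1}}g_{\psi_1}y)$ into $\int f(k_{O_{\psi_1}}u(\mathbf s)g_{\psi_1}y)\,d\sigma(\mathbf s)$.

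Compensating the magnification by conjugation is exactly what produces the inserted factor $u(\mathbf s)$. That factor is bounded but not close to the identity, so the replacement costs $O(\mathcal S_{\infty,p}(f))$, not $e^{-cn}$. At best, \cref{Rd-equi} controls the walk smoothed on the left by $u_*\sigma$ at unit scale. An effective bound for the smoothed measure does not by itself give one for the walk.

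This obstruction is present even when all $r_\phi$ are equal, where $r_{\psi_1}$ is deterministic and there is no correlation to remove. When the ratios vary it is worse. Then $r_{\psi_1}$ and $\psi_1(\mathbf s)$ are dependent, so conditioning on the time does not leave a $\sigma$-distributed point. Since \cref{Rd-equi} is stated for a fixed $t$, it does not even apply to the smoothed quantity.

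\textbf{Why the direction is wrong.} The identity expresses expanding translates of $\sigma$ as averages of $u_*\sigma$-smoothings of the walk. It carries information from the walk to the translates, not back. Proving \eqref{conv-equi} needs control of the joint law of $(r_\psi,\mathbf b_\psi)$ at the critical scale $r_\psi$ itself, which \cref{Rd-equi} does not see. Your fallback to the route of \cite{BHZ_2026} is therefore not a proof, only a pointer to the citation the paper already relies on.
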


We then prove the following corollary.\begin{theorem}
    Let $\mu$ be as defined above, and $\zeta_n=\lambda_n*\delta_x$ be a $\mu$-almost random walk starting at $x\in X.$ Then there exist constants $\Delta,c>0$, depending on $\mu$, $\Lambda$ and the constants in \mbox{%DIFAUXCMD
\cref{almost-def}}\hskip0pt%DIFAUXCMD
, so that for $n\geq1$ and $f\in B_{\infty,p}(X)$, \begin{equation}
        |\zeta_n(f)-m_X(f)|\ll\mathcal S_{\infty,p}(f)\inj(x)^{-\Delta}e^{-cn}.
    \end{equation}
\end{theorem}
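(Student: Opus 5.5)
The plan is to combine the close-to-convolution property \eqref{almost-conv} with the random walk equidistribution \eqref{conv-equi}, using \cref{prop-recurr} to control the dependence on the starting point at time $n-m$. Fix $n\geq1$, set $m=\lceil\theta n\rceil$, and write
\[
|\zeta_n(f)-m_X(f)|\leq|\zeta_n(f)-\mu^{*m}*\zeta_{n-m}(f)|+\Bigl|\int_X\bigl(\mu^{*m}*\delta_y(f)-m_X(f)\bigr)\,d\zeta_{n-m}(y)\Bigr|.
\]
By \eqref{almost-conv}, the first term is at most $C\mathcal S_{\infty,p}(f)e^{-cn}$, with no dependence on $x$.

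For the second term, I will fix a threshold $\rho\in(0,1]$ and split $X$ into $\{\inj\geq\rho\}$ and $\{\inj<\rho\}$.
\begin{itemize}
\item On $\{\inj\geq\rho\}$, \eqref{conv-equi} bounds the integrand by $\mathcal S_{\infty,p}(f)\rho^{-1}e^{-c'm}$.
\item On $\{\inj<\rho\}$, I will bound the integrand trivially by $2\Vert f\Vert_\infty\leq2\mathcal S_{\infty,p}(f)$. Then \cref{prop-recurr}, applied to the almost random walk at time $n-m$, bounds the $\zeta_{n-m}$-mass of this set by $\rho^{\delta}\bigl(1+e^{-\delta(n-m)}\inj(x)^{-\Delta}\bigr)$.
\end{itemize}
Together the second term is
\[
\ll\mathcal S_{\infty,p}(f)\bigl(\rho^{-1}e^{-c'\theta n}+\rho^{\delta}\inj(x)^{-\Delta}\bigr),
\]
where I used $1+e^{-\delta(n-m)}\inj(x)^{-\Delta}\ll\inj(x)^{-\Delta}$, valid since $\inj$ is bounded above on $X$.

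Choosing $\rho=e^{-c'\theta n/2}$ gives a bound of order $\mathcal S_{\infty,p}(f)\inj(x)^{-\Delta}e^{-c''n}$ with $c''=\min\{c'\theta/2,\ \delta c'\theta/2\}$. Adding the first term yields the claim with exponent $\min\{c,c''\}$.

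The step needing care is the application of \cref{prop-recurr}. That proposition, as stated, is for the given almost random walk started at $x$, and $\zeta_{n-m}$ is that same walk at time $n-m$, so it applies directly. Note that $n-m\geq(1-\theta)n-1$ can be small for small $n$; this is harmless because the bound is uniform in $n$. One could alternatively split only on the starting point: if $\inj(x)$ is tiny the claim is trivial once $\Delta$ is large. The splitting above, however, handles everything uniformly. No further obstacle is expected, since all ingredients (recurrence and random walk equidistribution) are already established.
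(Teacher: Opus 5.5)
Your proposal is correct and follows essentially the same argument as the paper. The paper also passes through $\mu^{*m}*\zeta_{n-m}$ via \eqref{almost-conv}, truncates at $\inj=\rho$ using \eqref{conv-equi} on the thick part and \cref{prop-recurr} at time $n-m$ on the thin part, and chooses $\rho=e^{-cm/2}$, which matches your choice of $\rho$ exponentially small in $\theta n$.
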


\begin{proof}
    Without loss of generality, $m_X(f)=0.$ Then by truncating at $\inj=\rho,$ we have by \eqref{conv-equi} that \begin{equation}\label{eff-tri1}
    |\mu^{*m}*\zeta_{n-m}(f)|\ll\mathcal S_{\infty,p}(f)(e^{-cm}\rho^{-1}+\zeta_{n-m}\{\inj\leq\rho\}).\end{equation} Moreover, \cref{prop-recurr} shows that \begin{equation}\label{eff-tri2}
        \zeta_{n-m}\{\inj\leq\rho\}\ll\rho^{\delta}(1+e^{-\delta(n-m)}\inj(x)^{-\Delta}).
    \end{equation}

    Finally, by setting $m=\lceil\theta n\rceil,\rho=e^{-cm/2},$ where $1>\theta>0$ is as in \cref{almost-def}, we finish the proof by \eqref{eff-tri1}, \eqref{eff-tri2} and \eqref{almost-conv} combined via triangle inequality.
\end{proof}

\section{ Proof of equidistribution theorems}\label{mainproof-sec}
The proof of \cref{main: single} and \cref{main: double} is inspired by the proof of double equidistribution for self-similar measures (given single equidistribution) in \cite[Proposition 6.1]{Benard_He_Zhang_2026}. One of the main tools they use is effective recurrence of random walks  as described in \cite[Section 5]{BHZ_2026}, whereas in this paper it shall be replaced by effective recurrence of almost random walks.

\subsection{Main ideas of proof} \label{idea}For a probability measure $\xi$ on $\R^d,$ we define the process $$\eta_{\xi}^t:=a(t)u(\mathbf s)d\xi(\mathbf s).$$%; see \cref{def-sec} for definitions. 
The key feature of self-similar measure $\sigma$ to establish Khintchine dichotomy via methods in \cite{BHZ_2026} is the following link between $\eta_\sigma^t$ and $\mu$-random walk(\cite[Lemma 8.2]{BHZ_2026}):$$\eta_{\sigma}^t=\int \delta_{k_g}*\eta_\sigma^{tr_g}*\delta_g d\mu^{*n}(g),\forall n\geq 1.$$ In particular, by zooming in to view the measure $g_{*}\sigma$ where $g$ is distributed as $\mu^{*n}$, one can either use high dimension of $\mu^{*n}$ or the existing single equidistribution to show (double) equidistribution.

However, $\nu$, the image of $\sigma,$ does not have such an exact relation. But we can still approximate such link via two steps. 

First, $\eta_\nu^t$ is close to \begin{equation}\label{close-to}\eta_\nu^{t,n}:=\int\delta_{k'_g}*\eta_\sigma^{tr_g}*\delta_g d\lambda_n(g)\end{equation} where $\lambda_n*\delta_x,x\in X$ form a $\mu$-almost random walk. Such closeness will be shown in \eqref{first-tri}, while the corresponding almost random walk will be discussed in \cref{x-phi-verify}.

Second, by \cref{arw-sec}, such almost random walks $(\lambda_n*\delta_x)_{n\geq1}$ have fruitful structures, similar to $\mu^{*n}*\delta_x,$ to effectively recur or even equidistribute. 

Note that using this approximation, one can follow the route of dimension bootstrap as in \cite{BHZ_2026} to reach the main theorem of this paper. For the sake of simplicity, we make this procedure implicit by utilizing the equidistribution result proven in \cite{BHZ_2026}.

\subsection{An example of almost random walks}
\label{x-phi-verify}
In this subsection we provide an example of almost random walks related to $\nu=h_*\sigma$. In particular, for $x\in X$, we define\begin{equation}\label{def-xphi}x_\phi:=k_{Dh(\mathbf b_\phi)O_\phi}^{-1}a(r_\phi^{-1})u(h(\mathbf b_\phi))x.\end{equation}We prove the following lemma,\begin{lemma} \label{xphi-almost}The laws
$$\zeta_n=\int\delta_{x_\phi}\,d\lambda^{*n}(\phi)$$
form a $\mu$-almost random walk starting from $y=k_{Dh(\mathbf0)}^{-1}u(h(\mathbf0))x.$\end{lemma}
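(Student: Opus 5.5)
The plan is to verify the two conditions of \cref{almost-def} directly for the elements
$$g^{(\phi)}:=k_{Dh(\mathbf b_\phi)O_\phi}^{-1}a(r_\phi^{-1})u(h(\mathbf b_\phi)-h(\mathbf 0))k_{Dh(\mathbf 0)},\qquad\text{so that } x_\phi=g^{(\phi)}y,$$
and to take $\lambda_n$ to be the law of $g^{(\phi)}$ when $\phi\sim\lambda^{*n}$.

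I first normalize so that $\mathbf 0\in\supp\sigma$, by conjugating $\lambda$ by a translation and replacing $h$ with its correspondingly translated version. Then $\mathbf b_\phi=\phi(\mathbf 0)\in\supp\sigma$ for every $\phi$ in the support of $\lambda^{*n}$, since $\supp\sigma$ is invariant. Hence \eqref{uniform-holder} and \eqref{bdd-assumption} apply at every point I use.

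\textbf{Finite moment \eqref{almost-moment}.} Because $c\le\det Dh$ and $\Vert Dh\Vert\le C$, the matrices $k_{Dh(\cdot)O}^{\pm1}$ stay in a fixed compact set. Therefore
$$\Vert\Ad(g^{(\phi)})\Vert\ll (r_\phi+r_\phi^{-1})^{O(1)}(1+\Vert\mathbf b_\phi\Vert)^{O(1)},$$
using $\Vert h(\mathbf b_\phi)-h(\mathbf 0)\Vert\le C\Vert\mathbf b_\phi\Vert$. For a composition $\phi=\phi_1\cdots\phi_n$ we have $r_\phi=\prod_i r_{\phi_i}$ and $\Vert\mathbf b_\phi\Vert\le\sum_i\bigl(\prod_{j<i}r_{\phi_j}\bigr)\Vert\mathbf b_{\phi_i}\Vert$. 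Taking $\gamma$ small compared with the $\epsilon$ in \eqref{lambda-finite}, applying $(\sum_i a_i)^\gamma\le\sum_i a_i^\gamma$ and using independence, I obtain $\int\Vert\Ad g\Vert^\gamma\,d\lambda_n\le e^{Kn}$.

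\textbf{Near-convolution \eqref{almost-conv}.} Write $\phi\sim\lambda^{*n}$ as $\phi=\psi\chi$ with $\psi\sim\lambda^{*(n-m)}$ (the outer factors) and $\chi\sim\lambda^{*m}$ independent. Since $g_{\psi\chi}=g_\chi g_\psi$, we get $\mu^{*m}*\zeta_{n-m}=\int\delta_{g_\chi x_\psi}\,d\lambda^{*(n-m)}(\psi)\,d\lambda^{*m}(\chi)$. I compute $g_\chi x_\psi$ with three relations: $k_Au(\mathbf s)k_A^{-1}=u(A\mathbf s)$, the fact that $k_O$ commutes with $a(\cdot)$, and $a(t)u(\mathbf s)=u(t\mathbf s)a(t)$. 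Setting $A=Dh(\mathbf b_\psi)O_\psi$, this gives
$$g_\chi x_\psi=k_{AO_\chi}^{-1}a(r_\phi^{-1})u\bigl(h(\mathbf b_\psi)+Dh(\mathbf b_\psi)r_\psi O_\psi\mathbf b_\chi\bigr)x.$$
On the other hand $\mathbf b_\phi=\mathbf b_\psi+\mathbf v$ with $\mathbf v=r_\psi O_\psi\mathbf b_\chi$, and $O_\phi=O_\psi O_\chi$. Comparing with $x_\phi$, there are two discrepancies.

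The first is the translation error $\mathbf e=h(\mathbf b_\psi+\mathbf v)-h(\mathbf b_\psi)-Dh(\mathbf b_\psi)\mathbf v$. I move it to the left past $a(r_\phi^{-1})$, where it is expanded by the factor $r_\phi^{-1}$, and then past the bounded $k$. By \eqref{uniform-holder} its size is at most
$$H r_\phi^{-1}\Vert\mathbf v\Vert^{1+\alpha}\le H\,r_\chi^{-1}r_\psi^{\alpha}\Vert\mathbf b_\chi\Vert^{1+\alpha}.$$
The second is the left factor $k_{Dh(\mathbf b_\phi)O_\phi}^{-1}k_{AO_\chi}$. It lies within $O(\Vert\mathbf v\Vert^\alpha)=O(r_\psi^\alpha\Vert\mathbf b_\chi\Vert^\alpha)$ of the identity, because $\Vert Dh(\mathbf b_\phi)-Dh(\mathbf b_\psi)\Vert\le H\Vert\mathbf v\Vert^\alpha$ and the $k$'s are bounded. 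Since $f$ is Lipschitz with constant $\ll\mathcal S_{\infty,p}(f)$, left multiplication by an element $\epsilon'$-close to the identity changes $f$ by $\ll\mathcal S_{\infty,p}(f)\epsilon'$.

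\textbf{Main obstacle.} The hard step is making $r_\chi^{-1}r_\psi^\alpha\Vert\mathbf b_\chi\Vert^{1+\alpha}$ exponentially small outside an exponentially small exceptional set. I would use large deviations for the i.i.d.\ sums $-\ln r$, which is available by the two-sided exponential moment in \eqref{lambda-finite}. Off a set of probability $e^{-c'n}$ this gives $r_\chi^{-1}\le e^{(l+\epsilon_0)m}$ and $r_\psi\le e^{-(l-\epsilon_0)(n-m)}$. Markov's inequality with the moment bound above gives $\Vert\mathbf b_\chi\Vert\le e^{\epsilon_0 n}$ off another exponentially small set. On the good set the error is $\le e^{[(l+\epsilon_0)\theta-\alpha(l-\epsilon_0)(1-\theta)+O(\epsilon_0)]n}$. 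This is exponentially small once $\theta<\alpha/(1+\alpha)$ and $\epsilon_0$ is small, so I fix such a $\theta$. On the bad set I bound $|f|\le\mathcal S_{\infty,p}(f)$. Together these give \eqref{almost-conv}.
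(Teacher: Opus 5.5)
Your proof is correct. Its algebraic core is the same as the paper's, but you control the error by a different probabilistic estimate.

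\textbf{Shared core.} Your factorization $x_{\psi\chi}=\epsilon'\,g_\chi x_\psi$, with $\epsilon'=k^{-1}_{Dh(\mathbf b_\phi)O_\phi}u(r_\phi^{-1}\mathbf e)k_{Dh(\mathbf b_\psi)O_\phi}$, is exactly the paper's identity for $z_{\psi,\phi}g_\phi^{-1}$ in \eqref{cocycle-error}. It comes with the same bound $r_\psi^\alpha\bigl(\Vert\mathbf b_\chi\Vert^\alpha+r_\chi^{-1}\Vert\mathbf b_\chi\Vert^{1+\alpha}\bigr)$.

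\textbf{Where you differ.} The difference is in turning this pointwise bound into \eqref{almost-conv}.
\begin{itemize}
\item The paper never removes an exceptional set. It uses $\min\{1,x\}\le x^q$ for small $q$, integrates, and factors by independence and Cauchy--Schwarz into $a^{n-m}B^m$, where $a=\int r_\phi^{\alpha q}\,d\lambda<1$ and $B$ comes from $\int r_\phi^{-2q}\,d\lambda$. It then takes $\theta$ small enough that $(1-\theta)\log a+\theta\log B<0$. This needs only a few moment computations and no large-deviation input.
\item Your truncation uses Chernoff bounds for $\sum\ln r$, Markov's inequality for $\Vert\mathbf b_\chi\Vert$, and the trivial bound $2\Vert f\Vert_\infty$ off the good set. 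It is somewhat heavier, but it makes the admissible range $\theta<\alpha/(1+\alpha)$ explicit.
\item The paper's condition approaches the same threshold as $q\to0$, since $\log a\approx-\alpha q l$ and $\log B\approx ql$. So the two routes give essentially the same range of $\theta$.
\end{itemize}

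\textbf{Normalization.} Your explicit normalization $\mathbf 0\in\supp\sigma$ at the start is a genuine improvement in presentation. The bounds \eqref{uniform-holder} are only available on $\supp\sigma$, and the paper performs this normalization only later, in \cref{single-verify}.

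\textbf{Bookkeeping in the Markov step.} To get the clean threshold $\theta<\alpha/(1+\alpha)$, you need the uniform bound $\sup_m\int\Vert\mathbf b_\chi\Vert^s\,d\lambda^{*m}<\infty$. Your own series estimate $\Vert\mathbf b_\chi\Vert^s\le\sum_i\prod_{j<i}r_{\chi_j}^s\Vert\mathbf b_{\chi_i}\Vert^s$ gives this once $\int r_\phi^s\,d\lambda<1$, which holds for small $s$ by \eqref{contra}. If you instead used only the exponential bound $e^{Km}$ from \eqref{almost-moment}, you would also need $\theta$ to be small compared with $\epsilon_0$.
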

\begin{proof}
Write $D_\phi=Dh(\mathbf b_\phi)$. Then $x_\phi=Q_\phi y$, where
$$Q_\phi=k_{D_\phi O_\phi}^{-1}a(r_\phi^{-1})u(h(\mathbf b_\phi)-h(\mathbf0))k_{Dh(\mathbf0)}.$$
The finite moment condition follows from \mbox{%DIFAUXCMD
\eqref{linear-sta},\eqref{lambda-finite},\eqref{bdd-assumption} and \eqref{uniform-holder}}\hskip0pt%DIFAUXCMD
. More explicitly, for all sufficiently small $s>0$,
\begin{equation}\label{translation-moment}
\sup_{j\geq0}\int\Vert\mathbf b_\phi\Vert^s\,d\lambda^{*j}(\phi)<\infty,
\qquad
\int r_\phi^{\pm s}\,d\lambda^{*j}(\phi)=\left(\int r_\phi^{\pm s}\,d\lambda(\phi)\right)^j<\infty,\int r_\phi^{ s}\,d\lambda(\phi)<1.
\end{equation}
We refer the details on how to get bounds for $\mathbf b_\phi$ as above to \cite[Section 3]{BHZ_2026}. Since
$$\Vert h(\mathbf b_\phi)-h(\mathbf0)\Vert\ll\Vert\mathbf b_\phi\Vert$$
by \mbox{%DIFAUXCMD
\eqref{bdd-assumption}}\hskip0pt%DIFAUXCMD
, and $k_{D_\phi O_\phi}$ and its inverse are uniformly bounded, \mbox{%DIFAUXCMD
\eqref{translation-moment} }\hskip0pt%DIFAUXCMD
and Cauchy--Schwarz give
$$\int\Vert\Ad(Q_\phi)\Vert^\gamma\,d\lambda^{*n}(\phi)\leq e^{Kn}$$
for some $\gamma,K>0$ and every $n\geq1$.

The rest of this proof will be devoted to proving the close-to-convolution property.

Note that $x_{\psi\phi}=z_{\psi,\phi}x_\psi$, where the step from $x_\psi$ to $x_{\psi\phi}$ is
\begin{equation*}
z_{\psi,\phi}=k_{D_{\psi\phi}O_{\psi\phi}}^{-1}a(r_{\psi\phi}^{-1})
u(h(\mathbf b_{\psi\phi})-h(\mathbf b_\psi))a(r_\psi)k_{D_\psi O_\psi}.
\end{equation*}
In the spirit of Hölder regularity assumption, we set the remainder
$$E_{\psi,\phi}=h(\mathbf b_{\psi\phi})-h(\mathbf b_\psi)-r_\psi D_\psi O_\psi\mathbf b_\phi.$$
Since $\mathbf b_{\psi\phi}=\mathbf b_\psi+r_\psi O_\psi\mathbf b_\phi$ and $O_{\psi\phi}=O_\psi O_\phi$, we have that the step $z_{\psi,\phi}$ is close to a random step $g_\phi$ in the following sense
\begin{equation}\label{cocycle-error}
\begin{split}
z_{\psi,\phi}g_\phi^{-1}
&=k_{D_{\psi\phi}O_{\psi\phi}}^{-1}
u\left(\frac{E_{\psi,\phi}}{r_\psi r_\phi}\right)k_{D_\psi O_{\psi\phi}},\\
\Vert z_{\psi,\phi}g_\phi^{-1}-I\Vert
&\leq \left\Vert k_{D_{\psi\phi}O_{\psi\phi}}^{-1}
k_{D_\psi O_{\psi\phi}}-I \right\Vert+\left\Vert k_{D_{\psi\phi}O_{\psi\phi}}^{-1}
\left(u\left(\frac{E_{\psi,\phi}}{r_\psi r_\phi}\right)-I\right)k_{D_\psi O_{\psi\phi}} \right\Vert  \\&\overset{\eqref{uniform-holder}}{\ll} r_\psi^\alpha\left(\Vert\mathbf b_\phi\Vert^\alpha+
\frac{\Vert\mathbf b_\phi\Vert^{1+\alpha}}{r_\phi}\right).
\end{split}
\end{equation}
%Here the second line uses both bounds in \mbox{%DIFAUXCMD
%\cref{uniform-holder}}\hskip0pt%DIFAUXCMD
%; the first term accounts for the change in $Dh$.

Choose $0<q\leq1$ sufficiently small that \mbox{%DIFAUXCMD
\eqref{translation-moment} }\hskip0pt%DIFAUXCMD
holds with $s=2(1+\alpha)q$ and
\begin{equation}\label{decay}
a:=\int r_\phi^{\alpha q}\,d\lambda(\phi)<1,\qquad
B:=\max\left\{1,\left(\int r_\phi^{-2q}\,d\lambda(\phi)\right)^{1/2}\right\}<\infty.
\end{equation}
By Cauchy--Schwarz and \mbox{%DIFAUXCMD
\eqref{translation-moment}}\hskip0pt%DIFAUXCMD
,
$$\int\left(\Vert\mathbf b_\phi\Vert^{\alpha q}+\Vert\mathbf b_\phi\Vert^{(1+\alpha)q}r_\phi^{-q}\right)\,d\lambda^{*m}(\phi)\ll B^m.$$
 For $f\in B_{\infty,p}(X)$(which in particular bounded $C^1$), $|f(gy)-f(y)|\ll\mathcal S_{\infty,p}(f)\min\{1,\Vert g-I\Vert\}$. Applying this with \mbox{%DIFAUXCMD
\eqref{cocycle-error} }\hskip0pt%DIFAUXCMD
and independence between $\phi\sim\lambda^{*m}$ and $\psi\sim\lambda^{*(n-m)}$, we have
\begin{equation}\begin{split}
&|\zeta_n(f)-\mu^{*m}*\zeta_{n-m}(f)|=\left|\int (f(x_{\psi\phi})-f(g_\phi x_\psi))d\lambda^{*m}(\phi)d\lambda^{*(n-m)}(\psi)\right|\\
&\ll\mathcal S_{\infty,p}(f)
\int r_\psi^{\alpha q}\left(\Vert\mathbf b_\phi\Vert^{\alpha q}
+\frac{\Vert\mathbf b_\phi\Vert^{(1+\alpha)q}}{r_\phi^q}\right)
\,d\lambda^{*m}(\phi)\,d\lambda^{*(n-m)}(\psi)\\
&\ll\mathcal S_{\infty,p}(f)a^{n-m}B^m.\label{almost-walk}
\end{split}\end{equation}
%We used that $g_{\psi\phi}=g_\phi g_\psi$, so the image of $\lambda^{*m}$ under $\phi\mapsto g_\phi$ is $\mu^{*m}$.

Finally, set $m=\lceil\theta n\rceil$ with $\theta>0$ sufficiently small that
$$(1-\theta)\log a+\theta\log B<0.$$
Then \mbox{%DIFAUXCMD
\eqref{almost-walk} and \eqref{decay} }\hskip0pt%DIFAUXCMD
show that $\zeta_n$ is a $\mu$-almost random walk.\end{proof}

\subsection{Equidistribution}\label{single-verify}

In this subsection, we aim to prove \cref{main: single}.

%\subsection{proof of \cref{main: single}}

We first recall the stationarity of $\nu$ by iterating \eqref{linear-sta}  $n$ times,
\begin{equation}
    \label{Rd-sta}\nu=\int(h\circ\phi)_*\sigma\,d\lambda^{*n}(\phi).
\end{equation}

Denote $$\mathcal E_{n,M}:=\{\phi\in\text{Sim}(\R^d):|nl+\ln r_\phi|\leq \frac{nl}{M}\},$$ then by large deviation principles and \eqref{contra}, there exists $\epsilon=\epsilon(\lambda,M)>0$ so that for $n\gg_M1,$ \begin{equation}\label{l-deviation}
    \lambda^{*n}(\mathcal E_{n,M})\geq 1-e^{-\epsilon n}.
\end{equation}

Then denote$$I(n,f,t,x):=\int_{\text{Sim}(\R^d)}\left|\int f(a(t)u(\mathbf s)x)d((h\circ\phi)_*\sigma)(\mathbf s)-m_X(f)\right|d\lambda^{*n}(\phi).$$ Note that by \eqref{Rd-sta} and triangle inequality,\begin{equation}\label{I1-tri}
    \left|\int f(a(t)u(\mathbf s)x)d(h_*\sigma)(\mathbf s)-m_X(f)\right|\leq I(n,f,t,x).
\end{equation} By \eqref{l-deviation} we can focus on $\mathcal E_{n,M}$ and have\begin{equation}\begin{split}\label{1st-I1}
    I(n,f,t,x){\leq}\int_{\mathcal E_{n,M}}\left|\int f(a(t)u(h\circ\phi(\mathbf s))x)d\sigma(\mathbf s)-m_X(f)\right|d\lambda^{*n}(\phi)+O(\mathcal S_{\infty,p}(f)e^{-\epsilon n}).
\end{split}
\end{equation}

Now fix some $\mathbf s_0\in\supp\sigma.$ Without loss of generality, $\mathbf s_0=\mathbf0$ (by translating coordinates and conjugating the similarities accordingly). Stationarity implies that $\phi(\supp\sigma)\subseteq \supp\sigma$ for $\lambda^{*n}$-almost every $\phi$, so the following evaluations of $h$ and $Dh$ are defined. Then we use \mbox{%DIFAUXCMD
\eqref{uniform-holder} }\hskip0pt%DIFAUXCMD
to linearize and see that for $\lambda^{*n}$-almost every $\phi\in\mathcal E_{n,M}$ and $\mathbf s\in \supp\sigma$ with $\Vert \mathbf s\Vert\leq e^{\frac{nl}{M}},$ we have that \begin{equation}\begin{split}\label{holder}f[a(t)u(h\circ\phi(\mathbf s))x]&=f[a(t)u(h\circ\phi(\mathbf0)+Dh(\mathbf b_\phi)r_\phi O_\phi\mathbf s+O(e^{-(1-\frac{2}{M})(1+\alpha)nl}))x]\\&=f[k_{Dh(\mathbf b_\phi)O_\phi} a(tr_\phi)u(\mathbf s)x_\phi]+O(\mathcal S_{\infty,p}(f)te^{-(1-\frac{2}{M})(1+\alpha)nl}).\end{split}\end{equation}
where $x_\phi$ is as in \eqref{def-xphi}. %$$$$x_\phi=k_{Dh(\mathbf b_\phi)O_\phi}^{-1}a(r_\phi^{-1})u(h(\mathbf b_\phi))x.$$

By \cite{BHZ_2026}, $\sigma$ has a polynomial tail:\begin{lemma}[Lemma 3.1 in \cite{BHZ_2026}]\label{poly-tail}
    There exists $\gamma>0$ so that $$\int\Vert\mathbf s\Vert^{\gamma}d\sigma(\mathbf s)<\infty.$$
\end{lemma}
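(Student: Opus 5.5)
We need to prove Lemma (poly-tail): $\sigma$ has a finite $\gamma$-moment for some $\gamma>0$. The plan is to use the standard random-series description of the stationary measure, or equivalently a direct contraction estimate on the stationarity equation.

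\textbf{Step 1: a contraction inequality for moments.} Take $0<\gamma\leq 1$ small. For $\gamma\le 1$ the map $t\mapsto t^\gamma$ is subadditive, so for any similarity $\phi$ and any $\mathbf s$,
$$\Vert\phi(\mathbf s)\Vert^\gamma\leq r_\phi^\gamma\Vert\mathbf s\Vert^\gamma+\Vert\mathbf b_\phi\Vert^\gamma.$$
By \eqref{lambda-finite}, for $\gamma\le\epsilon$ both $\int r_\phi^\gamma\,d\lambda$ and $\int\Vert\mathbf b_\phi\Vert^\gamma\,d\lambda$ are finite. By \eqref{contra} and differentiability of $\gamma\mapsto\int r_\phi^\gamma\,d\lambda$ at $0$ (justified by the two-sided moment $\int r_\phi^{\pm\epsilon}<\infty$ and dominated convergence), we have
$$\int r_\phi^\gamma\,d\lambda=1-\gamma l+o(\gamma).$$
Hence, shrinking $\gamma$, we get $\rho:=\int r_\phi^\gamma\,d\lambda(\phi)<1$. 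This is the same computation as in \eqref{translation-moment}.

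\textbf{Step 2: iterate without assuming finiteness.} Directly integrating the stationarity relation \eqref{linear-sta} would give $\sigma(\Vert\cdot\Vert^\gamma)\le\rho\,\sigma(\Vert\cdot\Vert^\gamma)+C$. This is only useful if the moment is already known to be finite, which is the real obstacle. I would avoid it in one of two ways.

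\emph{Truncation.} Apply the inequality to $\min(\Vert\cdot\Vert^\gamma,T)$. The truncation does not commute cleanly with $\phi$, so this route is less convenient.

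\emph{Random-series representation (preferred).} By the construction in \cite{Bougerol_Picard_1992}, $\sigma$ is the law of the almost surely convergent limit
$$\lim_{n}\phi_1\cdots\phi_n(\mathbf 0)=\sum_{k\geq1}r_{\phi_1}\cdots r_{\phi_{k-1}}O_{\phi_1}\cdots O_{\phi_{k-1}}\mathbf b_{\phi_k},$$
where the $\phi_i$ are i.i.d.\ with law $\lambda$. Alternatively, $\sigma$ is the weak limit of $\lambda^{*n}*\delta_{\mathbf 0}$.

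\textbf{Step 3: bound the moment termwise.} Using subadditivity of $t^\gamma$ and independence,
$$\mathbb E\Bigl\Vert\sum_k\cdots\Bigr\Vert^\gamma\leq\sum_{k\ge1}\rho^{k-1}\int\Vert\mathbf b_\phi\Vert^\gamma\,d\lambda<\infty.$$
If one prefers the weak-limit formulation, the same bound holds uniformly in $n$ for $\lambda^{*n}*\delta_{\mathbf 0}(\Vert\cdot\Vert^\gamma)$. Since $\Vert\cdot\Vert^\gamma$ is continuous and nonnegative, lower semicontinuity (Fatou's lemma under weak convergence) passes the uniform bound to $\sigma$. Either way we conclude $\int\Vert\mathbf s\Vert^\gamma\,d\sigma(\mathbf s)<\infty$.

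The only delicate point is Step 1, namely ensuring $\rho<1$ for some small $\gamma>0$. It follows from \eqref{contra} together with the two-sided moment assumption in \eqref{lambda-finite}, via the derivative of the moment function at $0$.
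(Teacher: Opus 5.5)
Your argument is correct. Note, however, that the paper itself contains no proof of this lemma: it is quoted from \cite[Lemma 3.1]{BHZ_2026}. So your write-up is a self-contained substitute for a citation rather than a competing route.

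What you give is the standard argument. You pick $\gamma$ with $\rho=\int r_\phi^\gamma\,d\lambda<1$, bound the $\gamma$-moment of $\phi_1\cdots\phi_n(\mathbf 0)$ uniformly in $n$ by subadditivity and independence, and pass to the limit law $\sigma$. It also supplies something the paper outsources. Since $\mathbf b_{\phi_1\cdots\phi_n}=\phi_1\cdots\phi_n(\mathbf 0)$, your partial-sum bound $\int\Vert\mathbf b_\phi\Vert^\gamma\,d\lambda^{*n}(\phi)\le(1-\rho)^{-1}\int\Vert\mathbf b_\phi\Vert^\gamma\,d\lambda(\phi)$ is precisely the first estimate in \eqref{translation-moment}. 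The proof of \cref{xphi-almost} defers that estimate to \cite[Section 3]{BHZ_2026}.

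The citation is shorter, while your version makes the dependence on the hypotheses explicit. It also shows that slightly less is needed for $\rho<1$ than you invoke. The quotient $(r^\gamma-1)/\gamma$ decreases to $\ln r$ as $\gamma\downarrow0$, so monotone convergence starting from $\gamma=\epsilon$ gives $(\rho-1)/\gamma\to-l$. This uses only $\int r_\phi^{\epsilon}\,d\lambda<\infty$, without the negative moment or dominated convergence.
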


Then by \eqref{holder} and \cref{poly-tail}, we have that for $\lambda^{*n}$-almost every $\phi\in\mathcal E_{n,M},$ \begin{equation}\label{first-tri}\begin{split}
    &\left|\int \left\{f[a(t)u(h\circ\phi(\mathbf s))x]-f[k_{Dh(\mathbf b_\phi)O_\phi} a(tr_\phi)u(\mathbf s)x_\phi]\right\}d\sigma(\mathbf s)\right|\\&\quad\ll \mathcal S_{\infty,p}(f)(e^{-\frac{nl\gamma}{M}}+te^{-(1-\frac{2}{M})(1+\alpha)nl}).
\end{split}\end{equation}

 By \eqref{1st-I1} and \eqref{first-tri}, we can now change the base point from $x$ to $x_\phi$ and approximately rescale $(h\circ\phi)_*\sigma$ to $\sigma;$ in particular, we have \begin{equation}\label{2nd-I1}
    \begin{split}
        I(n,f,t,x)&{\leq}\int_{\mathcal E_{n,M}}\left|\int f(k_{Dh(\mathbf b_\phi)O_\phi}a(tr_\phi)u(\mathbf s)x_\phi)d\sigma(\mathbf s)-m_X(f)\right|d\lambda^{*n}(\phi)\\&+O(\mathcal S_{\infty,p}(f)(e^{-\epsilon n}+e^{-\frac{nl\gamma}{M}}+te^{-(1-\frac{2}{M})(1+\alpha)nl})).
    \end{split}
\end{equation}

On the other hand, by boundedness of $k_{Dh(\mathbf b_\phi)O_\phi}$ and its inverse from \eqref{bdd-assumption}, $G$-invariance of Haar measure, and \cref{Rd-equi}, we have, provided $tr_\phi>1$,\begin{equation}\label{second-tri}
    \left|\int f[k_{Dh(\mathbf b_\phi)O_\phi}a(tr_\phi)u(\mathbf s)x_\phi]d\sigma(\mathbf s)-m_X(f)\right|\ll \mathcal S_{\infty,p}(f)\text{inj}(x_\phi)^{-1}t^{-c}e^{c(1+\frac{1}{M})nl},\phi\in\mathcal E_{n,M}
\end{equation}

Then the thing left over is to show that for all $\phi$ except a set of small measure with respect to $\lambda^{*n},$ $\text{inj}(x_\phi)$ is not small. We use the following effective recurrence for $x_\phi$ from \cref{prop-recurr} and the fact that the laws of $x_\phi$, for $\phi\sim\lambda^{*n}$, form a $\mu$-almost random walk starting from $y:=k_{Dh(\mathbf0)}^{-1}u(h(\mathbf0))x$, as proved in \mbox{%DIFAUXCMD
\cref{xphi-almost}}\hskip0pt%DIFAUXCMD
, where $\inj(y)\asymp_h\inj(x)$.\begin{proposition}\label{nonlinear-recurr}There exist constants $\Delta,\delta>0$ so that for $n\geq0$ and $0<\rho\leq1$,
    $$\lambda^{*n}(\phi:\text{inj}(x_\phi)\leq\rho)\ll\rho^{\delta}(1+e^{-\delta n}\text{inj}(x)^{-\Delta}).$$
\end{proposition}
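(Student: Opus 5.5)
This is a direct combination of \cref{xphi-almost} with \cref{prop-recurr}. By \cref{xphi-almost}, the laws $\zeta_n=\int\delta_{x_\phi}\,d\lambda^{*n}(\phi)$ form a $\mu$-almost random walk starting from $y=k_{Dh(\mathbf0)}^{-1}u(h(\mathbf0))x$. The constants $K,\gamma,\theta,C,c$ in \cref{almost-def} are explicit from that proof: they come from \eqref{translation-moment}, \eqref{decay}, \eqref{bdd-assumption} and \eqref{uniform-holder}. They depend only on $\lambda$ and $h$, and in particular not on $x$. By definition,
$$\lambda^{*n}(\phi:\inj(x_\phi)\leq\rho)=\zeta_n\{\inj\leq\rho\}.$$
So \cref{prop-recurr} applied to this almost random walk gives
$$\lambda^{*n}(\phi:\inj(x_\phi)\leq\rho)\ll\rho^{\delta}\bigl(1+e^{-\delta n}\inj(y)^{-\Delta}\bigr),$$
with $\Delta,\delta$ and the implied constant depending only on the data of \cref{almost-def}, on $\mu$ and on $\Lambda$.

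It remains to replace $\inj(y)$ by $\inj(x)$. Write $y=gx$ with $g=k_{Dh(\mathbf0)}^{-1}u(h(\mathbf0))$. By \eqref{bdd-assumption}, $Dh(\mathbf0)$ has bounded norm and determinant at least $c$, so $k_{Dh(\mathbf0)}^{\pm1}$ lies in a compact subset of $G$. Also $h(\mathbf0)$ is a fixed vector (recall $\mathbf0=\mathbf s_0\in\supp\sigma$). Hence $\Vert\Ad(g)\Vert$ and $\Vert\Ad(g^{-1})\Vert$ are bounded by a constant depending only on $h$.

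Next use \eqref{growth-rate} together with \cref{h-inj} for the height function $\beta$. We have $\beta(y)\leq\Vert\Ad(g)\Vert^{O(1)}\beta(x)\ll_h\beta(x)$, and symmetrically $\beta(x)\ll_h\beta(y)$. By \cref{h-inj}, $\inj(y)\gg\beta(y)^{-C}\gg_h\beta(x)^{-C}\gg\inj(x)^{C/c}$. Alternatively one can argue directly: the injectivity radius changes by a bounded factor under a bounded group element. In either case $\inj(y)^{-\Delta}\ll_h\inj(x)^{-\Delta'}$ for some $\Delta'>0$. Renaming $\Delta'$ as $\Delta$ gives the proposition.

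The only point needing care is uniformity in the starting point. \cref{prop-recurr} must give constants independent of the base point, which holds because its proof uses only \eqref{almost-moment}, \eqref{almost-conv} and \cref{h-fcn:def}. Likewise, the almost-random-walk constants produced in \cref{xphi-almost} must be uniform in $x$. This holds because the estimate \eqref{almost-walk} depends only on $f$ and on the laws of $\phi,\psi$, not on $y$. The moment bound \eqref{almost-moment} concerns $Q_\phi$, which is also independent of $x$. Since the $x$-dependence only enters through $\beta(y)$ in \eqref{main-recurr}, nothing further is needed.
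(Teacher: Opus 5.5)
Your proposal is correct and follows the paper's route: the paper gets this proposition by applying \cref{prop-recurr} to the $\mu$-almost random walk from \cref{xphi-almost} started at $y$, and then uses $\inj(y)\asymp_h\inj(x)$, which is your direct bounded-group-element argument. One small slip in your alternative height-function chain: the lower bound in \cref{h-inj} is $\inj\gg\beta^{-c}$, so the chain should read $\inj(y)\gg\beta(y)^{-c}\gg_h\beta(x)^{-c}\gg\inj(x)^{c/C}$; this does not affect the conclusion.
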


Finally, applying \eqref{second-tri} and \cref{nonlinear-recurr} with $\rho=e^{-\frac{cnl}{M}}$ to \eqref{2nd-I1}, we have\begin{equation}\begin{split}
\label{Rd-main bd}    I(n,f,t,x)\overset{}{\ll} \mathcal S_{\infty,p}(f)(e^{-\epsilon n}+\text{inj}(x)^{-\Delta}e^{-\frac{\delta cnl}{M}}+e^{-\frac{nl\gamma}{M}}+te^{-(1-\frac{2}{M})(1+\alpha)nl}+t^{-c}e^{c(1+\frac{2}{M})nl}).\end{split}
\end{equation} %On $\{\inj(x_\phi)\leq\rho\}$ we use the trivial bound $2\Vert f\Vert_\infty$. The same estimates bound the average of the absolute conditional errors in the first line.

Now combining \eqref{I1-tri} and \eqref{Rd-main bd}, the proof of \cref{main: single} finishes by choosing $M$ large enough so that $$(1-\frac{2}{M})(1+\alpha)>1+\frac{2}{M},$$ then choosing $$\ln t=\beta nl+O(1),\beta\in(1+\frac{2}{M},(1-\frac{2}{M})(1+\alpha)).$$ %This choice also ensures $tr_\phi>1$ on $\mathcal E_{n,M}$ for all sufficiently large $t$; bounded $t$ are absorbed by increasing the constant.

\subsection{Double equidistribution}

In this subsection we prove \cref{main: double}.

     To lighten notation, we write $\mathcal{S} = \mathcal{S}_{\infty,p}$ and $\mathcal{S}(f_1, f_2) = \mathcal{S}(f_1)\mathcal{S}(f_2)$. Noting the relation
\[
\Delta_{f_1,f_2}^\nu(t_1,t_2) \le \Delta_{f_1,f_2-m_X(f_2)}^\nu(t_1,t_2) + \left| \int_{\R^d} f_1(a(t_1)u(\mathbf s)x_0) \, \mathrm{d}\nu(\mathbf s) - m_X(f_1) \right| |m_X(f_2)|
\]
and that \cref{main: single} provides us with a constant $c' = c'(h,\Lambda, \sigma) > 0$ such that
\[
\left| \int_{\R^d} f_1(a(t_1)u(\mathbf s)x_0) \, \mathrm{d}\nu(\mathbf s) - m_X(f_1) \right| \ll \mathcal{S}(f_1)t_1^{-c'},
\]
we can reduce to the case where $m_X(f_2) = 0$.

Thus, we are left to bound the integral
\[
J := \int_{\R^d} f_1(a(t_1)u(\mathbf s)x_0) f_2(a(t_2)u(\mathbf s)x_0) \, \mathrm{d}\nu(\mathbf s)
\]
by a quantity of the form $O_\eta(\mathcal{S}(f_1, f_2)t_2^{-\kappa})$ where $\kappa = \kappa(\Lambda, h,\sigma, \eta) > 0$.

Again by stationarity \eqref{Rd-sta} and large deviation principles \eqref{l-deviation}, we have\begin{equation}\label{1st-I2}
    \begin{split}
        |J|{\leq}\left|\int_{\mathcal E_{n,M}}\int_{\R^d} f_1(a(t_1)u(\mathbf s)x_0)f_2(a(t_2)u(\mathbf s)x_0) \, \mathrm{d}(h\circ\phi)_*\sigma(\mathbf s)d\lambda^{*n}(\phi)\right|+O(\mathcal S(f_1,f_2)e^{-\epsilon n}).
    \end{split}
\end{equation}

Now for $\lambda^{*n}$-almost every $\phi\in\mathcal E_{n,M}$ and $\mathbf s\in \supp\sigma$ with $\Vert\mathbf s\Vert\leq e^{\frac{nl}{M}}$ we have that \begin{equation}
    \label{bdd-Taylor}
|f_1(a(t_1)u(h\circ\phi(\mathbf s))x_0)-f_1(a(t_1)u(h(\mathbf b_\phi))x_0)|\ll \mathcal S(f_1) t_1e^{-(1-\frac{2}{M})nl},\end{equation} where \mbox{%DIFAUXCMD
\eqref{bdd-assumption} and \eqref{uniform-holder} }\hskip0pt%DIFAUXCMD
give $\Vert h(\phi(\mathbf s))-h(\mathbf b_\phi)\Vert\ll r_\phi\Vert\mathbf s\Vert$ on this range.

Thus by \eqref{1st-I2}, \eqref{bdd-Taylor} and polynomial tail of $\sigma$ \cref{poly-tail}, we have \begin{equation}\begin{split}|J|&{\leq} \mathcal S(f_1)\int_{\text{Sim}(\R^d)}\left|\int_{\R^d} f_2(a(t_2)u(\mathbf s)x_0) \, \mathrm{d}(h\circ\phi)_*\sigma(\mathbf s)\right|d\lambda^{*n}(\phi)\\&+O(\mathcal S(f_1,f_2)(e^{-\epsilon n}+e^{-\frac{nl\gamma}{M}}+t_1e^{-(1-\frac{2}{M})nl}))\\&=\mathcal S(f_1)I(n,f_2,t_2,x_0)+O(\mathcal S(f_1,f_2)(e^{-\epsilon n}+e^{-\frac{nl\gamma}{M}}+t_1e^{-(1-\frac{2}{M})nl})).\label{2nd-I2}\end{split}\end{equation}

Finally, by \eqref{2nd-I2} and control for the single equidistribution \eqref{Rd-main bd}, we have \begin{equation}
    \begin{split}
        |J|&{\ll}\mathcal S(f_1,f_2)\bigl(e^{-\epsilon n}+\text{inj}(x_0)^{-\Delta}e^{-\frac{\delta cnl}{M}}+e^{-\frac{nl\gamma}{M}}+t_1e^{-(1-\frac{2}{M})nl}\\&\qquad{}+t_2e^{-(1-\frac{2}{M})(1+\alpha)nl}+t_2^{-c}e^{c(1+\frac{2}{M})nl}\bigr).
    \end{split}
\end{equation}

%where the last inequality comes from \mbox{%DIFAUXCMD
%\cref{Rd-main bd}}\hskip0pt%DIFAUXCMD
%, with $x=x_0$.
%

Now we finish the proof of \cref{main: double} by choosing $$nl=\omega \ln t_2+(1-\omega)\ln t_1+O(1),\frac{1}{\omega}\in(1+\frac{2}{M},(1-\frac{2}{M})(1+\alpha)).$$

%$M$ large enough so that
%$$\min\{(1-\tfrac{2}{M})(1+\eta),(1-\tfrac{2}{M})(1+\alpha)\}>1+\tfrac{2}{M},$$
%then choosing
%$$q\in\left(\max\left\{\frac{1}{(1-\frac{2}{M})(1+\eta)},\frac{1}{(1-\frac{2}{M})(1+\alpha)}\right\},\frac{1}{1+\frac{2}{M}}\right),\qquad
%n=\left\lfloor\frac{q\ln t_2}{l}\right\rfloor.$$
%Indeed, $t_1\leq t_2^{1/(1+\eta)}$, so every term above is bounded by a negative power of $t_2$. The bounded range of $t_2$ is absorbed by increasing the constant.
%

\section{Future directions}\label{future-sec}

One direction to extend the current setting is to unfold the dimension bootstrap hidden in \cref{Rd-equi}. In particular, as described at the end of \cref{idea}, one may go beyond the images of self-similar measures by revisiting the full path in \cite{BHZ_2026} and making suitable approximation assumptions. So we make the following conjecture:

\begin{conjecture}
    Suppose a probablity measure $\xi$ on $\R^d$ satisfies that $\eta_{\xi}^t$ is `close to' some $\eta_\xi^{t,n}$ as described in \eqref{close-to}. Then such $\xi$ would satisfy (double) equidistribution and thus Khintchine dichotomy.
\end{conjecture}

On the other hand, we have established Khintchine dichotomy for nonlinear repellers conjugate to self-similar set, but the family of repellers is much more fruitful.
Indeed, the things we have so far implicitly rely on the applications of projection theorems as established in \cite{BHZ_2026,Benard_He_Zhang_2026}. More general conditions can be explored via making dimension bootstrap explicit. Nonlinear repellers seem to be a reasonable substitute with certain structure to bootstrap(e.g. the recent work \cite{LiPanTongXu2026}), thus leading to the following interesting question\begin{problem}
    What is the genuine condition needed for measures from repellers on $\R^d$ to establish equidistribution as in \cref{main: single}? What about Khintchine dichotomy?
\end{problem}

It is worthwile to remark that an explicit counterexample for the problem above is at hand: the repellers defined by Gauss map $x\to\{\frac{1}{x}\}$ on finitely many intervals of the form $[1/n,1/(n-1)]$, consist of only badly approximable numbers with bounded continued fraction digits.

Another route would be to extend the setting to general nondegenerate maps $h:\R^d\to\R^k.$ In particular, when $d<k,$ this is the problem of Khintchine dichotomy on nondegenerate manifolds(see e.g. \cite{BY,BeresnevichDattaYang2026} for a dynamics proof over Lebesgue measure):\begin{problem}
    Is the Khintchine dichotomy true for self-similar measures on general nondegenerate manifolds? If not, what condition would suffice?
\end{problem}

Note that equidistribution of expanding translates has been proved in \cite{ShahYang2023} for suitable sufficiently smooth nondegenerate manifolds and absolutely continuous parameter measures, using shift-invariance of Lebesgue measure and Ratner's theorem. However, this route seems difficult for self-similar measures which doesn't necessarily have shift-invariance. Possibly, a new framework for upgrading equidistribution on certain blocks of the matrices to the full group via nondegeneracy of $h$ is needed.

\bibliographystyle{abbrv}
\bibliography{image_Khintchine.bib}

@preamble{ " \newcommand{\noop}[1]{} " }

@article{Benard_He_Zhang_2026,
  author = {B{\'e}nard, Timoth{\'e}e and He, Weikun and Zhang, Han},
  title = {{Khintchine} dichotomy for self-similar measures},
  journal = {Journal of the American Mathematical Society},
  volume = {39},
  number = {3},
  pages = {587--623},
  year = {2026},
  doi = {10.1090/jams/1070},
  url = {https://doi.org/10.1090/jams/1070}
}

@article {C,
    AUTHOR = {Cheung, Yitwah},
     TITLE = {Hausdorff dimension of the set of singular pairs},
   JOURNAL = {Ann. of Math. (2)},
  FJOURNAL = {Annals of Mathematics. Second Series},
    VOLUME = {173},
      YEAR = {2011},
    NUMBER = {1},
     PAGES = {127--167},
      ISSN = {0003-486X},
   MRCLASS = {22E40 (11K55 37A15 37D40)},
  MRNUMBER = {2753601},
MRREVIEWER = {Thomas Ward},
       DOI = {10.4007/annals.2011.173.1.4},
       URL = {https://doi-org.proxy.lib.umich.edu/10.4007/annals.2011.173.1.4},
}

@article {KN,
    AUTHOR = {Kim, Dong Han and Nakada, Hitoshi},
     TITLE = {Metric inhomogeneous {D}iophantine approximation on the field
              of formal {L}aurent series},
   JOURNAL = {Acta Arith.},
  FJOURNAL = {Acta Arithmetica},
    VOLUME = {150},
      YEAR = {2011},
    NUMBER = {2},
     PAGES = {129--142},
      ISSN = {0065-1036},
   MRCLASS = {11J61 (11J83 11K60)},
  MRNUMBER = {2836382},
MRREVIEWER = {Radhakrishnan Nair},
       DOI = {10.4064/aa150-2-3},
       URL = {https://doi-org.proxy.lib.umich.edu/10.4064/aa150-2-3},
}

@article {BY,
    AUTHOR = {Beresnevich, Victor and Yang, Lei},
     TITLE = {Khintchine's theorem and {D}iophantine approximation on
              manifolds},
   JOURNAL = {Acta Math.},
  FJOURNAL = {Acta Mathematica},
    VOLUME = {231},
      YEAR = {2023},
    NUMBER = {1},
     PAGES = {1--30},
      ISSN = {0001-5962,1871-2509},
   MRCLASS = {11J83 (11K55 11K60 57N35)},
  MRNUMBER = {4652409},
MRREVIEWER = {Hemangi\ Madhusudan\ Shah},
}

@article {d,
    AUTHOR = {de Mathan, Bernard},
     TITLE = {Approximations diophantiennes dans un corps local},
   JOURNAL = {Bull. Soc. Math. France Suppl. M\'{e}m.},
  FJOURNAL = {Bulletin de la Soci\'{e}t\'{e} Math\'{e}matique de France. Supplement.
              M\'{e}moire},
    VOLUME = {21},
      YEAR = {1970},
     PAGES = {93},
      ISSN = {0583-8665},
   MRCLASS = {10.34},
  MRNUMBER = {274396},
MRREVIEWER = {W. J. LeVeque},
}

@article{KLW,
  author = {Kleinbock, Dmitry and Lindenstrauss, Elon and Weiss, Barak},
  title = {On fractal measures and {Diophantine} approximation},
  journal = {Selecta Mathematica. New Series},
  volume = {10},
  number = {4},
  pages = {479--523},
  year = {2004},
  doi = {10.1007/s00029-004-0378-2},
  url = {https://doi.org/10.1007/s00029-004-0378-2}
}

@incollection {R,
    AUTHOR = {Qu\^{e}me, Roland},
     TITLE = {On {D}iophantine approximation by algebraic numbers of a given
              number field: a new generalization of {D}irichlet
              approximation theorem},
      NOTE = {Journ\'{e}es Arithm\'{e}tiques, 1989 (Luminy, 1989)},
   JOURNAL = {Ast\'{e}risque},
  FJOURNAL = {Ast\'{e}risque},
    NUMBER = {198-200},
      YEAR = {1991},
     PAGES = {273--283 (1992)},
      ISSN = {0303-1179},
   MRCLASS = {11J17 (11J68)},
  MRNUMBER = {1144331},
MRREVIEWER = {S. V. Kotov},
}

@article {M,
    author = {Mahler, Kurt},
    title = {An analogue to Minkowski's geometry of numbers in a field of series},
    journal = {Ann. of Math},
    volume = {(2) 42},
    year = {1941},
    pages = {488?522},

}

@article {S,
    author = {W. M. Schmidt},
    title = {Simultaneous approximation to algebraic numbers by elements of a number field},
    journal = {Monatsh. Math.},
    volume = {79},
    year = {1975},
    pages = {55-66},

}

@article{Bougerol_Picard_1992,
  author = {Bougerol, Philippe and Picard, Nico},
  title = {Strict stationarity of generalized autoregressive processes},
  journal = {The Annals of Probability},
  volume = {20},
  number = {4},
  pages = {1714--1730},
  year = {1992},
  doi = {10.1214/aop/1176989526},
  url = {https://doi.org/10.1214/aop/1176989526}
}

@misc{BHZ_2026,
  author = {B{\'e}nard, Timoth{\'e}e and He, Weikun and Zhang, Han},
  title = {{Khintchine} dichotomy and {Schmidt} estimates for self-similar measures on {$\mathbb{R}^d$}},
  year = {2025},
  eprint = {2508.09076},
  archivePrefix = {arXiv},
  primaryClass = {math.DS},
  howpublished = {\url{https://arxiv.org/abs/2508.09076v2}},
  note = {Preprint, arXiv:2508.09076v2, revised 21 February 2026},
  doi = {10.48550/arXiv.2508.09076}
}

@article{Prohaska_Sert_Shi_2023,
  author = {Prohaska, Roland and Sert, {\c{C}}a{\u{g}}r{\i} and Shi, Ronggang},
  title = {Expanding measures: Random walks and rigidity on homogeneous spaces},
  journal = {Forum of Mathematics, Sigma},
  volume = {11},
  pages = {e59},
  year = {2023},
  doi = {10.1017/fms.2023.56},
  url = {https://doi.org/10.1017/fms.2023.56}
}

@article{Mahler_1984,
  author = {Mahler, Kurt},
  title = {Some suggestions for further research},
  journal = {Bulletin of the Australian Mathematical Society},
  volume = {29},
  number = {1},
  pages = {101--108},
  year = {1984},
  doi = {10.1017/S0004972700021316},
  url = {https://doi.org/10.1017/S0004972700021316}
}

@article{Pollington_Velani_2005,
  author = {Pollington, Andrew and Velani, Sanju L.},
  title = {Metric {Diophantine} approximation and `absolutely friendly' measures},
  journal = {Selecta Mathematica. New Series},
  volume = {11},
  number = {2},
  pages = {297--307},
  year = {2005},
  doi = {10.1007/s00029-005-0007-8},
  url = {https://doi.org/10.1007/s00029-005-0007-8}
}

@article{Simmons_Weiss_2019,
  author = {Simmons, David and Weiss, Barak},
  title = {Random walks on homogeneous spaces and {Diophantine} approximation on fractals},
  journal = {Inventiones Mathematicae},
  volume = {216},
  number = {2},
  pages = {337--394},
  year = {2019},
  doi = {10.1007/s00222-019-00856-7},
  url = {https://doi.org/10.1007/s00222-019-00856-7}
}

@article{Khalil_Luethi_2023,
  author = {Khalil, Osama and Luethi, Manuel},
  title = {Random walks, spectral gaps, and {Khintchine}'s theorem on fractals},
  journal = {Inventiones Mathematicae},
  volume = {232},
  number = {2},
  pages = {713--831},
  year = {2023},
  doi = {10.1007/s00222-022-01171-4},
  url = {https://doi.org/10.1007/s00222-022-01171-4}
}

@misc{Yu_2021,
  author = {Yu, Han},
  title = {Rational points near self-similar sets},
  year = {2021},
  eprint = {2101.05910},
  archivePrefix = {arXiv},
  primaryClass = {math.NT},
  howpublished = {\url{https://arxiv.org/abs/2101.05910}},
  note = {Preprint, arXiv:2101.05910},
  doi = {10.48550/arXiv.2101.05910}
}

@misc{Datta_Jana_2024, title={On Fourier asymptotics and effective equidistribution},eprint = {2407.11961}, archivePrefix = {arXiv},
  primaryClass = {math.DS},
  howpublished = {\url{https://arxiv.org/abs/2407.11961}}, note = {Preprint, arxiv: 2407.11961v3, revised 12 July 2026}, author={Datta, Shreyasi and Jana, Subhajit}, year={2024}, month={Jul}}

@article{AlgomChangWuWu2025,
  author  = {Algom, Amir and Chang, Yuanyang and Wu, Meng and Wu, Yu-Liang},
  title   = {{Van der Corput} and metric theorems for geometric
             progressions for self-similar measures},
  journal = {Mathematische Annalen},
  volume  = {393},
  pages   = {183--214},
  year    = {2025},
  doi     = {10.1007/s00208-025-03233-3}
}

@article{BakerBanaji2025,
  author  = {Baker, Simon and Banaji, Amlan},
  title   = {Polynomial {Fourier} decay for fractal measures
             and their pushforwards},
  journal = {Mathematische Annalen},
  volume  = {392},
  pages   = {209--261},
  year    = {2025},
  doi     = {10.1007/s00208-025-03091-z}
}

@incollection{Kaufman1984,
  author    = {Kaufman, Robert},
  title     = {On {Bernoulli} convolutions},
  booktitle = {Conference in Modern Analysis and Probability
               (New Haven, Conn., 1982)},
  series    = {Contemporary Mathematics},
  volume    = {26},
  pages     = {217--222},
  publisher = {American Mathematical Society},
  address   = {Providence, RI},
  year      = {1984},
  doi       = {10.1090/conm/026/737403}
}

@misc{BanajiYu2026Qualitative,
  author        = {Banaji, Amlan and Yu, Han},
  title         = {{Fourier} transform of nonlinear images of
                   self-similar measures: qualitative aspects},
  year          = {2026},
  eprint        = {2606.09743},
  archivePrefix = {arXiv},
  primaryClass  = {math.DS},
  doi           = {10.48550/arXiv.2606.09743},
  howpublished = {\url{https://arxiv.org/abs/2606.09743}}
}

@misc{LiPanTongXu2026,
  author        = {Li, Jialun and Pan, Wenyu and Tong, Yao and Xu, Disheng},
  title         = {Dimensions of surface repellers and attractors
                   of non-linear planar {IFS}s},
  year          = {2026},
  eprint        = {2608.30744},
  archivePrefix = {arXiv},
  primaryClass  = {math.DS},
  howpublished  = {\url{https://arxiv.org/abs/2608.30744}}
}

@misc{BeresnevichDattaYang2026,
  author        = {Beresnevich, Victor and Datta, Shreyasi and Yang, Lei},
  title         = {Weighted {Diophantine} approximation on manifolds},
  year          = {2026},
  eprint        = {2602.11045},
  archivePrefix = {arXiv},
  primaryClass  = {math.NT},
  howpublished  = {\url{https://arxiv.org/abs/2602.11045}}
}

@article{ShahYang2023,
  author       = {Shah, Nimish A. and Yang, Pengyu},
  title        = {Limit distributions of expanding translates of shrinking
                  submanifolds and non-improvability of {Dirichlet}'s
                  approximation theorem},
  journal      = {Journal of Modern Dynamics},
  volume       = {19},
  pages        = {947--965},
  year         = {2023},
  doi          = {10.3934/jmd.2023027},
  howpublished = {\url{https://doi.org/10.3934/jmd.2023027}}
}

@article{Khintchine1926,
  author  = {Khintchine, A.},
  title   = {Zur metrischen {Theorie} der diophantischen {Approximationen}},
  journal = {Mathematische Zeitschrift},
  volume  = {24},
  year    = {1926},
  pages   = {706--714},
  doi     = {10.1007/BF01216806}
}

@article{BenoistQuint2013III,
  author  = {Benoist, Yves and Quint, Jean-Fran{\c{c}}ois},
  title   = {Stationary measures and invariant subsets of homogeneous spaces ({III})},
  journal = {Annals of Mathematics},
  volume  = {178},
  number  = {3},
  year    = {2013},
  pages   = {1017--1059},
  doi     = {10.4007/annals.2013.178.3.5}
}

@article{Weiss2001,
  author  = {Weiss, Barak},
  title   = {Almost no points on a {Cantor} set are very well approximable},
  journal = {Proceedings of the Royal Society A: Mathematical, Physical and Engineering Sciences},
  volume  = {457},
  number  = {2008},
  year    = {2001},
  pages   = {949--952},
  doi     = {10.1098/rspa.2000.0700}
}

@article{BenoistQuint2013II,
  author  = {Benoist, Yves and Quint, Jean-Fran{\c{c}}ois},
  title   = {Stationary measures and invariant subsets of homogeneous spaces ({II})},
  journal = {Journal of the American Mathematical Society},
  volume  = {26},
  number  = {3},
  year    = {2013},
  pages   = {659--734},
  doi     = {10.1090/S0894-0347-2013-00760-2}
}

@article{Schmidt1960,
  author  = {Schmidt, Wolfgang},
  title   = {A metrical theorem in {Diophantine} approximation},
  journal = {Canadian Journal of Mathematics},
  volume  = {12},
  year    = {1960},
  pages   = {619--631},
  doi     = {10.4153/CJM-1960-056-0}
}

@article{KleinbockShiWeiss2017,
  author  = {Kleinbock, Dmitry and Shi, Ronggang and Weiss, Barak},
  title   = {Pointwise equidistribution with an error rate and with respect to unbounded functions},
  journal = {Mathematische Annalen},
  volume  = {367},
  number  = {1--2},
  year    = {2017},
  pages   = {857--879},
  doi     = {10.1007/s00208-016-1404-3}
}

@article{BenoistQuint2012,
  author  = {Benoist, Yves and Quint, Jean-Fran{\c{c}}ois},
  title   = {Random walks on finite volume homogeneous spaces},
  journal = {Inventiones Mathematicae},
  volume  = {187},
  number  = {1},
  year    = {2012},
  pages   = {37--59},
  doi     = {10.1007/s00222-011-0328-5}
}

@article{EMM1998,
  author  = {Eskin, Alex and Margulis, Gregory and Mozes, Shahar},
  title   = {Upper bounds and asymptotics in a quantitative version
             of the {Oppenheim} conjecture},
  journal = {Annals of Mathematics},
  volume  = {147},
  number  = {1},
  pages   = {93--141},
  year    = {1998},
  doi     = {10.2307/120984}
}

@incollection{EM2004,
  author    = {Eskin, Alex and Margulis, Gregory},
  title     = {Recurrence properties of random walks on finite volume
               homogeneous manifolds},
  booktitle = {Random Walks and Geometry},
  editor    = {Kaimanovich, Vadim A.},
  publisher = {Walter de Gruyter},
  address   = {Berlin and New York},
  pages     = {431--444},
  year      = {2004},
  doi       = {10.1515/9783110198089.2.431}
}

@article{EMM2015,
  author  = {Eskin, Alex and Mirzakhani, Maryam and Mohammadi, Amir},
  title   = {Isolation, equidistribution, and orbit closures for the
             {$\mathrm{SL}(2,\mathbb{R})$} action on moduli space},
  journal = {Annals of Mathematics},
  volume  = {182},
  number  = {2},
  pages   = {673--721},
  year    = {2015},
  doi     = {10.4007/annals.2015.182.2.7}
}
\end{document}